\numberwithin{equation}{section}
\theoremstyle{plain}
\newtheorem{theorem}{Theorem}[section]
\newtheorem{proposition}[theorem]{Proposition}
\theoremstyle{definition}
\newtheorem{definition}[theorem]{Definition}
\newtheorem{rem}[theorem]{Remark}
\newtheorem{cor}[theorem]{Corollary}
\DeclareMathAlphabet{\mathbbo}{U}{bbold}{m}{n}
\newcommand{\bbR}{\mathbb{R}}
\newcommand{\bbS}{\mathbb{S}}
\newcommand{\mF}{\mathcal{F}}
\newcommand{\mcP}{\mathcal{P}}
\newcommand{\Rn}{\bbR^d}
\newcommand{\bRb}{\bbR}
\newcommand{\Sn}{\bbS^{d-1}}
\newcommand{\rmd}{\mathrm{d}}
\newcommand{\E}{\mathbb{E}}
\newcommand{\R}{\mathbb{R}}
\title{On the exit-problem for self-interacting diffusions}
\author[1]{Ashot Aleksian}
\author[2]{Pierre Del Moral}
\author[3]{Aline Kurtzmann}
\author[1]{Julian~Tugaut}
\affil[1]{Universit\'e Jean Monnet, Institut Camille Jordan, 23, rue du docteur Paul Michelon,
CS 82301,
42023 Saint-\'Etienne Cedex 2,
France.}
\affil[2]{INRIA Bordeaux Research Center, France.}
\affil[3]{Universit\'e de Lorraine, Institut Elie Cartan de Lorraine, CNRS, Institut Elie Cartan de Lorraine, UMR 7502, Vandoeuvre-l\`es-Nancy, F-54506, France. }
\begin{document}

\maketitle

\begin{abstract}
We study the exit-time from a domain of a self-interacting diffusion, where the Brownian motion is replaced by $\sigma B_t$ for a constant $\sigma$. The first part of this work consists in showing that the rate of convergence (of the occupation measure of the self-interacting process toward some explicit Gibbs measure) previously obtained in~\cite{kk-ejp} for a convex confinment potential $V$ and a convex interaction potential can be bounded uniformly with respect to $\sigma$. Then, we prove  an Arrhenius-type law for the first exit-time from a domain (satisfying classical hypotheses of Freidlin-Wentzell theory). 

\emph{Keywords} : Self-interacting diffusion, exit-time, Kramers' law, deterministic flow.

\emph{Mathematics Subject Classification} :  60K35, 60H10

\end{abstract}

\section{Introduction}\label{s:intro}
Path-interaction processes have been introduced by Norris, Rogers and Williams during the late 80s in~\cite{NRW}. Since this period, they have been an intensive research area. Under the name of Brownian Polymers, Durrett and Rogers~\cite{DR} studied a family of self-interacting diffusions, as a model for the shape of a growing polymer.  Denoting by $X_t$ the location of the end of the growing polymer at time $t$, the process $X$ satisfies a stochastic differential equation driven by a Brownian motion,  with a drift term depending on its own occupation measure.   One is then interested in finding the scale for which the process converges to a non trivial limit. Later, another model of growing polymer has been introduced by Bena\"im, Ledoux and Raimond~\cite{BLR}, for which the drift term depends on its own empirical measure. Namely, they have studied  the following process living in a compact
smooth connected Riemannian manifold $M$ without boundary:
\begin{equation*}
\mathrm{d}X_t = \sum_{i=1}^N F_i(X_t)\circ\mathrm{d}B_t^i - \int_M \nabla_x W(X_t,y)\mu_t(\mathrm{d}y)\mathrm{d}t,
\end{equation*}
where $W$ is a (smooth) interaction potential, $(B^1,\cdots,B^N)$
is a standard Brownian motion on $\mathbb{R}^N$, $\mu_t =\frac{1}{t} \int_0^t \delta_{X_s} \rmd s$ and the symbol
$\circ$ stands for the Stratonovich stochastic integration. 
In the compact setting, they have shown that the asymptotic behaviour of the empirical measure of the process can be related to the analysis of some deterministic dynamical flow. Later, Bena\"im and Raimond~\cite{BR} gave sufficient conditions for the almost sure convergence of the empirical measure (again in the compact setting). More recently, Raimond~\cite{Ra} has generalized the previous study and has proved that for the solution of the SDE living on a compact manifold $$\rmd X_t = \rmd B_t - \frac{g(t)}{t} \int_0^t \nabla_x V(X_t, X_s) \rmd s \ \rmd t$$ unless $g$ is constant, the approximation of the empirical measure by a deterministic flow is no longer valid.

Similar questions have also been answered in the non-compact setting, that is $\Rn$. Chambeu and Kurtzmann~\cite{CK} have studied the ergodic behaviour of the self-interacting diffusion depending on the empirical mean of the process. They have proved, under some convexity assumptions (ensuring the non-explosion in finite time of the process), a convergence criterion for the diffusion solution to the SDE $$\rmd X_t = \rmd B_t - g(t) \nabla V\left(X_t - \frac{1}{t}\int_0^t X_s \rmd s \right)  \ \rmd t$$
where $g$ is a positive function. 
This model could represent for instance the behaviour of some social insects, as ants who are marking their paths with the trails' pheromones. This paper shows in particular how difficult is the study of general self-interacting diffusions in non-compact spaces as in~\cite{AK}, driven by the generic equation $$\rmd X_t = \rmd B_t - \frac{1}{t} \int_0^t \nabla_x V(X_t, X_s) \rmd s \ \rmd t.$$ Nevertheless, if the interaction function $V$ is symmetric and  uniformly convex, then Kleptsyn and Kurtzmann~\cite{kk-ejp} obtained the limit-quotient ergodic theorem for the self-attracting diffusion. Moreover, they managed to obtain a speed of convergence. As the results of this former paper are essential for the present work, we will explain them more precisely in \S\ref{ss:speed}. 
\medskip

Another problem related to this paper is the diffusion corresponding to McKean-Vlasov's PDE. This corresponds to the Markov process governed by the SDE
\begin{equation}
\rmd X_t = \rmd B_t - \nabla W * \nu_t (X_t)\rmd t
\end{equation}
where $\nu_t = \mathcal{L}(X_t)$, $W$ is a smooth convex potential and * stands for the convolution. The asymptotic behaviour of $X$ has been studied by various authors these last years, see for instance Cattiaux, Guillin and Malrieu~\cite{CGM}. It turns out that under some assumptions, the law $\nu_t$ converges to the (unique if $W$ is strictly convex) probability measure solution to the equation $\nu = \frac{1}{Z} e^{-2W*\nu}$ where $Z=Z(\nu)$ is the normalisation constant. In the latter paper, the authors use a particle system to prove both a convergence result (with convergence rate) and a deviation inequality for solutions of granular media equation when the interaction potential is uniformly convex at infinity. To this end, they use a uniform propagation of chaos property and a control in Wasserstein distance of solutions starting from different initial conditions. 

A related question to this problem concerns the exit-times from domains of attraction for the following motion
\begin{equation}\label{eq:MKV}
\rmd X_t^\sigma = \sigma \rmd B_t - \nabla V(X_t^\sigma) \rmd t - \nabla W*\nu_t (X_t^\sigma) \rmd t
\end{equation}
where $V$ is a potential, * stands for the convolution, $\nu_t = \mathcal{L}(X_t^\sigma)$ and $\sigma >0$. This was addressed by Herrmann, Imkeller and Peithmann~\cite{HIP}, who exhibited a Kramers' type law for the particle's exit from the potential's domains of attraction and a large deviations principle for the self-stabilizing (also named McKean-Vlasov) diffusion. To get this, they reconstructed the Freidlin-Wentzell theory for the self-stabilizing
diffusion. More precisely, they established a large deviations principle with a good rate function. The exit-problem for the McKean-Vlasov diffusion has also been already studied recently, without using the Freidlin-Wentzell method. In~\cite{T2011f}, Tugaut has analysed the exit-problem (time and location) in convex landscapes, showing the same result as Herrmann, Imkeller and Peithmann, but without reconstructing the proofs of Freidlin and Wentzell. He has then generalised very recently his results in the case of double-wells landscape in~\cite{JOTP}. In~\cite{Tug14c,PTRF}, Tugaut did not use large deviations principle but a coupling method
between the time-homogeneous diffusion $$\rmd X_t = \sigma \rmd B_t - \nabla V (X_t) \rmd t - \nabla W (X_t - m) \rmd t$$ (where $m$ is the unique point at which the vector field $\nabla V$ equals 0) and the McKean-Vlasov diffusion so that the results on the exit-time of $X$ can be used for the exit-time of the self-stabilizing diffusion~\eqref{eq:MKV}.

 The present paper also deals with the exit-time problem of a specific diffusion, driven by the SDE
\begin{equation}\label{eq:sid}
\mathrm{d}X_t = \sigma \rmd B_t - \left( \nabla V (X_t) + \frac{1}{t}\int_0^t \nabla W( X_t -  X_s) \rmd s \right) \rmd t, \quad X_0 = x\in \Rn
\end{equation}
where $V,W$ are two potentials and $\sigma  >0$. We could adapt the techniques introduced by Herrmann, Imkeller and Peithmann but only in the case of a convex gradient $V$. Our aim is to generalize the study also to non-convex potentials. In the present work, we will solve the exit-problem (time and location) for the diffusion~\eqref{eq:sid}. Indeed, the exit-location will be easily obtained once we know the exit-time. The motivation for the study of such a diffusion is twofold. First, we wish to obtain the basin of attraction  when the diffusion converges (if we know the speed of convergence, or at least a nice upper bound of it). And more challenging, our main aim consists in improving the simulated annealing method (even if we need a concave interaction in that case). This paper is a first  step in this direction.

In the following, for readability issue, we will omit the $sigma-$exponent for the process $X$ as well as the empirical measure $\mu_t$. Nevertheless, the reader has to keep in mind that the process and $\mu_t$ depend on $\sigma$.

\subsection{Some useful notations}
As usual, we denote by $\mathcal{M}(\Rn)$ the space of signed
(bounded) Borel measures on $\Rn$ and by $\mathcal{P}(\Rn)$ its
subspace of probability measures. We will need the following measure
space:
\begin{equation}
\mathcal{M}(\Rn;P):= \{\mu \in \mathcal{M}(\Rn); \int_{\Rn} P(|y|) \,
\vert\mu\vert(\mathrm{d}y)< +\infty\},
\end{equation}
where $\vert\mu\vert$ is the variation of $\mu$ (that is
$\vert\mu\vert := \mu^{+} + \mu^{-}$ with $(\mu^{+},\mu^{-})$ the
Hahn-Jordan decomposition of $\mu$: $\mu = \mu^+ -\mu^-$) and $P$ is some polynomial. Belonging to this space will enable us to
always check the integrability of $P$ (and therefore of $V,W$ and their   
derivatives thanks to the domination
condition~\eqref{domination}) with respect to the (random)
measures to be considered. We endow this space with the dual weighted supremum norm (or dual $P$-norm) defined for $\mu \in \mathcal{M}(\Rn;P)$ by
\begin{equation}
\vert \vert \mu\vert \vert_P := \sup_{\varphi\in \mathcal{C}(\Rn); \vert \varphi\vert
\leq P} \left\vert \int_{\Rn} \varphi\, \mathrm{d}\mu\right\vert = \int_{\Rn} P(|y|) \, |\mu|(\mathrm{d}y),
\end{equation}
where $\mathcal{C}(\Rn)$ is the set of continuous functions $\Rn \to \mathbb{R}$. Without any loss of generality, we suppose that $P(|x|)\ge 1$, so that $\|\mu\|_P \ge |\mu(\Rn)|$.
This norm makes $\mathcal{M}(\Rn;P)$ a Banach space. Next, we consider $\mathcal{P}(\Rn;P) =
\mathcal{M}(\Rn;P) \cap \mathcal{P}(\Rn)$. In the sequel, $(\cdot,\cdot)$ stands for the Euclidean
scalar product.

\begin{definition}
\label{stasta}
Let $d$ be any positive integer. Let $\mathcal{G}$ be a subset of $\mathbb{R}^{d}$ and let $U: \mathbb{R}^{d} \to \mathbb{R}^d$ be a vector field  satisfying some ``good assumptions''. For all $x\in\mathbb{R}^d$, we consider the dynamical system $\rho_t(x)=x+\int_0^tU\left(\rho_s(x)\right) \rmd{s}$. We say that the domain $\mathcal{G}$ is positively invariant for the flow generated by $U$ if the orbit $\left\{\rho_t(x)\,;\,t\in\mathbb{R}_+\right\}$ is included in $\mathcal{G}$ for all $x\in\mathcal{G}$.
\end{definition}

\subsection{Main results}

The precise assumptions on the potentials will be given later in Section~\ref{s:diffusion-linear}. 

The goal of this paper consists in finding some precise upper and lower bounds for the exit-time of some positively invariant domain. 
\begin{theorem}
\label{dell}
Let $\mathcal{D}$ be a domain that is positively invariant for the flow $x\mapsto -\nabla V(x) - \nabla W(x-m)$ and denote by $\tau$ the first time the process $X$ exits the domain $\mathcal{D}$. 
Let  $H:=\inf_{x\in\partial\mathcal{D}}\left(V(x)+W(x-m)-V(m)\right)$ be the exit cost from $\mathcal{D}$. Then $\displaystyle\mathbb{P}-\lim_{\sigma\to0}\frac{\sigma^2}{2}\log(\tau)=H$ that is for any $\delta>0$, we have
\begin{equation}
\label{eq:dell}
\lim_{\sigma\to0}\mathbb{P}\left(\exp\left\{\frac{2}{\sigma^2}\left(H-\delta\right)\right\}\leq\tau\leq\exp\left\{\frac{2}{\sigma^2}\left(H+\delta\right)\right\}\right)=1\,.
\end{equation}
\end{theorem}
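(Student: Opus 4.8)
The plan is to exploit the uniform-in-$\sigma$ convergence of the occupation measure $\mu_t$ toward the Gibbs measure that was established in the first part of the paper, and then compare the self-interacting diffusion~\eqref{eq:sid} with the classical time-homogeneous diffusion driven by the frozen drift $-\nabla V(x)-\nabla W(x-m)$, for which the Freidlin--Wentzell/Kramers estimate is already known. Concretely, I would first record the classical result: for the auxiliary diffusion $\rmd Y_t = \sigma\,\rmd B_t - \nabla\bigl(V(Y_t)+W(Y_t-m)\bigr)\rmd t$, the exit-time $\tau^Y$ from the positively invariant domain $\mathcal{D}$ satisfies $\frac{\sigma^2}{2}\log\tau^Y \to H$ in probability as $\sigma\to0$, where $H=\inf_{x\in\partial\mathcal{D}}\bigl(V(x)+W(x-m)-V(m)\bigr)$ is exactly the quasi-potential cost because $\mathcal{D}$ is positively invariant for the associated flow (so the cost is the naive energy difference, with no need to optimise over paths leaving and re-entering).

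The heart of the argument is to show that the genuine drift $b_t(X_t):=\nabla V(X_t)+\frac1t\int_0^t\nabla W(X_t-X_s)\,\rmd s$ is, for large $t$, uniformly close to the frozen drift $\nabla V(X_t)+\nabla W(X_t-m)$. Using the Lipschitz/domination bounds on $\nabla W$, one has $\bigl|\frac1t\int_0^t\nabla W(x-X_s)\,\rmd s - \nabla W(x-m)\bigr| \le \mathrm{Lip}(\nabla W)\cdot\bigl\|\mu_t - \delta_m\bigr\|$ in an appropriate weighted sense; but $\delta_m$ is not the Gibbs limit, so instead I would compare with $\nabla W*\bar\mu_\sigma(x)$ where $\bar\mu_\sigma$ is the $\sigma$-dependent Gibbs measure, control $\|\mu_t-\bar\mu_\sigma\|_P$ by the uniform-in-$\sigma$ rate from \S\ref{ss:speed}, and then separately control $\|\bar\mu_\sigma - \delta_m\|_P\to0$ as $\sigma\to0$ (the Gibbs measure concentrates at the unique minimiser $m$ as the temperature vanishes, by Laplace's method). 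This yields: on an event of probability tending to $1$, for $t\ge t_0(\sigma)$ with $t_0$ controlled, the drift differs from the frozen one by at most $\eps(\sigma)\to0$.

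With this in hand I would run the standard two-sided comparison. For the upper bound on $\tau$: once the occupation measure has equilibrated, the process behaves like $Y$ with a drift perturbed by $O(\eps(\sigma))$; a Freidlin--Wentzell lower-bound-on-exit-probability argument (forcing an excursion to $\partial\mathcal{D}$ in bounded time with probability $\ge\exp(-\frac{2}{\sigma^2}(H+\delta/2))$, then iterating) gives $\tau\le\exp(\frac{2}{\sigma^2}(H+\delta))$ with probability $\to1$. For the lower bound: the perturbed drift still points inward on a neighbourhood of $\partial\mathcal{D}$ (positive invariance is stable under small $C^0$ perturbations of the vector field, after possibly shrinking $\mathcal{D}$ slightly to $\mathcal{D}_{-\eta}$ and using that $H$ varies continuously), so the standard upper-bound-on-exit-probability estimate gives $\tau\ge\exp(\frac{2}{\sigma^2}(H-\delta))$ with probability $\to1$. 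Combining the two inclusions yields~\eqref{eq:dell}.

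The main obstacle I anticipate is the non-Markovian, path-dependent nature of the drift through $\mu_t$: the Freidlin--Wentzell machinery is built for (nearly) Markovian diffusions, so one must carefully quantify the equilibration time $t_0(\sigma)$ and check it is negligible compared to the Kramers timescale $\exp(\frac{2}{\sigma^2}H)$ — i.e. that the occupation measure has converged \emph{before} the exit can plausibly occur. This is precisely where the \emph{uniform-in-$\sigma$} rate of convergence is indispensable: a rate that degraded as $\sigma\to0$ would leave a window in which the diffusion is driven by a badly-approximated drift, breaking the comparison. A secondary technical point is handling the initial segment $t\in[0,t_0]$, where the drift is not yet close to the frozen one; here one argues that the process cannot reach $\partial\mathcal{D}$ in time $t_0(\sigma)$ except on a negligible-probability event, using a crude Gronwall/moment estimate together with positive invariance of $\mathcal{D}$ for the limiting flow.
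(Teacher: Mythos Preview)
Your outline is correct in its architecture and identifies the right ingredients: the uniform-in-$\sigma$ equilibration of $\mu_t$, the concentration of the Gibbs limit at $\delta_m$, the separate treatment of the pre-equilibration window, and the comparison with the frozen diffusion $Y$ governed by $-\nabla V-\nabla W(\cdot-m)$. This is exactly the strategy the paper follows.

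The execution differs in one important respect. You propose to bound the drift discrepancy $\bigl|\nabla W*\mu_t(x)-\nabla W(x-m)\bigr|$ and then argue that Freidlin--Wentzell estimates are robust under small $C^0$ drift perturbations (forcing excursions, iterating, perturbing positive invariance). The paper instead runs a \emph{synchronous pathwise coupling}: it defines $Y$ by~\eqref{sandra} with the \emph{same} Brownian motion as $X$, starting at $X_{T_\kappa(\sigma)}$, and computes $\frac{\rmd}{\rmd t}|X_t-Y_t|^2$. The uniform convexity $\nabla^2 V\ge\rho$, $\nabla^2 W\ge\alpha$ makes this a contractive ODE, so $|X_t-Y_t|$ stays below any fixed $\xi>0$ up to time $\exp\{\tfrac{2}{\sigma^2}(H+5)\}$ with probability $\ge 1-\sqrt{\kappa}$ (Proposition~\ref{prop:coupling}). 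Then one never touches the large-deviations machinery for the non-Markovian process at all: one simply applies classical Freidlin--Wentzell to $Y$ on an enlarged domain $\mathcal{D}^e$ (exit cost $H+\delta/2$) for the upper bound and a contracted domain $\mathcal{D}^c$ (exit cost $H-\delta/2$) for the lower bound, and the coupling transfers the conclusion to $X$.

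What this buys: the paper's route reduces everything to \emph{off-the-shelf} Freidlin--Wentzell for the Markovian $Y$, at the price of needing the convexity to make the coupling contractive. Your route is in principle more portable (it does not rely on a dissipative coupling inequality), but it forces you to re-establish, or at least carefully cite, quantitative Freidlin--Wentzell bounds for a diffusion whose drift is only known to be within $\eps(\sigma)$ of a nice gradient field on a random, time-dependent event---which is precisely the technical swamp the coupling argument is designed to drain. Your handling of the initial segment via comparison with the deterministic flow is the same as the paper's (Proposition~\ref{craie}), and your observation that the uniform-in-$\sigma$ rate is indispensable is exactly the point of \S\ref{ss:speed}.
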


From Theorem \ref{dell}, we immediately obtain the classical statement on the exit-location.
\begin{cor}
\label{hphp}
Under the same assumptions as the ones of Theorem \ref{dell}, if $\mathcal{N}\subset\partial\mathcal{D}$ is such that $\inf_{z\in\mathcal{N}}\left(V(z)+W(z-m)-V(m)\right)>H$, then
\begin{equation}
\label{eq:hphp}
\lim_{\sigma\to0}\mathbb{P}\left(X_{\tau}\in\mathcal{N}\right)=0\,.
\end{equation}

\end{cor}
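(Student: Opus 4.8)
The plan is to deduce the exit-location statement from the exit-time estimate in Theorem \ref{dell} by a standard Freidlin--Wentzell-type comparison of timescales. The idea is that exiting through the "expensive" part $\mathcal{N}$ of the boundary costs strictly more than the generic exit cost $H$, so on the relevant exponential timescale such an exit is too slow to happen with positive probability. First I would fix $\mathcal{N}\subset\partial\mathcal{D}$ with $H_{\mathcal{N}}:=\inf_{z\in\mathcal{N}}\left(V(z)+W(z-m)-V(m)\right)>H$, and choose $\delta>0$ small enough that $H+\delta<H_{\mathcal{N}}-\delta$. The event $\{X_\tau\in\mathcal{N}\}$ will be split according to whether $\tau$ is larger or smaller than the threshold $\exp\{\tfrac{2}{\sigma^2}(H+\delta)\}$.

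On the event $\{X_\tau\in\mathcal{N}\}\cap\{\tau\le\exp\{\tfrac{2}{\sigma^2}(H+\delta)\}\}$, I would argue that a neighbourhood $\mathcal{N}^{+}$ of $\mathcal{N}$ in $\overline{\mathcal{D}}$ can be chosen, open relative to $\overline{\mathcal D}$, so that $\inf_{z\in\mathcal{N}^{+}}\left(V(z)+W(z-m)-V(m)\right)>H+\delta$ (possible by continuity of $V,W$ and compactness of $\partial\mathcal D$, after shrinking $\delta$). One can then enlarge $\mathcal{D}$ slightly to a domain $\mathcal{D}'$, still positively invariant for the flow $x\mapsto-\nabla V(x)-\nabla W(x-m)$, whose boundary near $\mathcal N$ lies inside $\mathcal N^{+}$, and whose exit cost $H'=\inf_{x\in\partial\mathcal D'}(V(x)+W(x-m)-V(m))$ satisfies $H'>H+\delta$; here I would invoke the same constructions used to prove Theorem \ref{dell} (the machinery giving the lower bound $\tau\ge\exp\{\tfrac{2}{\sigma^2}(H'-\delta')\}$ with probability tending to $1$). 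Since exiting $\mathcal D$ through $\mathcal N$ before the threshold forces an exit of $\mathcal D'$ at a comparable or earlier time, and the lower-bound half of Theorem \ref{dell} applied to $\mathcal D'$ says exactly that $\tau'\ge\exp\{\tfrac{2}{\sigma^2}(H'-\delta')\}>\exp\{\tfrac{2}{\sigma^2}(H+\delta)\}$ with probability $\to1$, this event has vanishing probability as $\sigma\to0$.

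For the complementary event $\{X_\tau\in\mathcal{N}\}\cap\{\tau>\exp\{\tfrac{2}{\sigma^2}(H+\delta)\}\}$, I would simply bound its probability by $\mathbb{P}\left(\tau>\exp\{\tfrac{2}{\sigma^2}(H+\delta)\}\right)$, which by the upper-bound half of \eqref{eq:dell} (with the parameter $\delta$) tends to $0$ as $\sigma\to0$. Combining the two estimates gives $\limsup_{\sigma\to0}\mathbb{P}(X_\tau\in\mathcal{N})=0$, which is \eqref{eq:hphp}.

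The main obstacle I anticipate is the first event: one needs a version of the lower-bound argument of Theorem \ref{dell} that applies to the enlarged domain $\mathcal D'$ and, more delicately, one must ensure $\mathcal D'$ can be taken positively invariant for the deterministic flow while pushing its exit cost above $H+\delta$ near $\mathcal N$ without lowering it elsewhere — this is the usual subtlety in Freidlin--Wentzell exit-location proofs. If the proof of Theorem \ref{dell} is instead organized so that it directly yields, with probability tending to one, that $X$ does not exit through any fixed closed subset of $\partial\mathcal D$ on which the cost exceeds $H$, then Corollary \ref{hphp} is immediate and no enlargement is needed; I would check which formulation the authors have available and use it.
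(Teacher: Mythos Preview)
Your overall strategy---split on whether $\tau$ exceeds a threshold $\exp\{\tfrac{2}{\sigma^2}(H+\delta)\}$ and, for the small-$\tau$ event, compare with the exit time from an auxiliary domain of strictly higher exit cost---is exactly the one the paper uses. The divergence is in how the auxiliary domain is built, and that is also where your argument has a genuine gap.

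Your description of $\mathcal{D}'$ is internally inconsistent: you call it an \emph{enlargement} of $\mathcal{D}$, yet require its boundary near $\mathcal{N}$ to lie in $\mathcal{N}^{+}\subset\overline{\mathcal{D}}$; and you ask its exit cost $H'$ to exceed $H+\delta$, even though the infimum $H$ on $\partial\mathcal{D}$ is attained \emph{away} from $\mathcal{N}$, where you have not moved the boundary. No domain with all three properties exists, and (as you yourself flag) local surgery on $\partial\mathcal{D}$ that also preserves positive invariance is delicate.

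The paper sidesteps this entirely by taking as the auxiliary domain a \emph{sublevel set} of the effective potential. Choosing $\xi>0$ with $\inf_{\mathcal N}(V+W(\cdot-m)-V(m))=H+3\xi$, it sets
\[
\mathcal{K}_{H+2\xi}:=\bigl\{x\in\mathbb{R}^d:\ V(x)+W(x-m)-V(m)<H+2\xi\bigr\}.
\]
This is automatically positively invariant for the flow $x\mapsto-\nabla V(x)-\nabla W(x-m)$ (it is a sublevel set of the very function whose gradient drives the flow), has exit cost exactly $H+2\xi$, and satisfies $\mathcal{N}\subset\mathcal{K}_{H+2\xi}^{\,c}$. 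Hence $\{X_\tau\in\mathcal{N}\}\subset\{\tau_\xi\le\tau\}$, where $\tau_\xi$ is the first exit of $X$ from $\mathcal{K}_{H+2\xi}$, and
\[
\mathbb{P}(\tau_\xi\le\tau)\le\mathbb{P}\bigl(\tau_\xi\le e^{2(H+\xi)/\sigma^2}\bigr)+\mathbb{P}\bigl(\tau\ge e^{2(H+\xi)/\sigma^2}\bigr),
\]
both terms tending to $0$ by Theorem~\ref{dell} applied to $\mathcal{K}_{H+2\xi}$ and to $\mathcal{D}$ respectively. The sublevel-set trick turns the obstacle you identify into a one-line construction; I would replace your $\mathcal{D}'$ paragraph with it.
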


\subsection{Outline}
Our paper is divided in two parts. First, Section~\ref{s:diffusion-linear} is devoted to the study of the self-interacting diffusion and more specifically, we will explain the former results of Kleptsyn and Kurtzmann~\cite{kk-ejp} and, more precisely, how we adapt them in our context. In particular, we will show that the empirical measure of the studied process converges almost surely with a rate  upper bounded independently of $\sigma$. After that, we will prove our main result, that is Theorem~\ref{dell}, in Section~\ref{s:proof}. To this aim, we will show that our process $X$ is close to the solution of a given deterministic flow in~\S\ref{s:majoration}. We then prove in~\S\ref{s:leaving} that the probability of leaving a positively invariant domain before the empirical mean remains stuck in the ball of center $m$ and radius $\kappa$ vanishes as $\sigma$ goes to zero. Finally, a coupling permits to conclude the proof of the main theorem in~\S\ref{ss:conclusion} and we give the proof of the corollary in~\S\ref{s:hphp}. 

\section{The self-interacting diffusion}\label{s:diffusion-linear}

 We assume that $V$ and $W$  satisfy the following set of hypotheses denoted by~\textbf{(H)}:
\begin{enumerate}
    \item[i)] (\textit{regularity and positivity}) $V \in \mathcal{C}^2(\mathbb{R}^d)$, $W\in
    \mathcal{C}^2(\mathbb{R}^d)$ and $V \geq 0, \; W \geq 0$;
    \item[ii)] (\textit{growth}) $V$ and $W$ have at most a polynomial growth: for some polynomial $P$, such that $P(|x|)\geq1$ for any $x\in\bRb^d$, we have $\forall x\in\Rn$:
    \begin{equation}\label{domination}
    \begin{aligned}
    |W(x)| +|\nabla W(x)| +  \|\nabla^2 W(x)\|  
    + |V(x)| +|\nabla V(x)| + \|\nabla^2 V(x)\|\leq P(|x|)
    \end{aligned}
    \end{equation}
    and $ \inf  \nabla^2 V\ge \rho>0$, $\inf \nabla^2 W\ge \alpha>0$,
    \begin{equation}\label{eq:growth}
    \Delta V(x)\leq aV(x) \,\, \text{and} \,\, \lim_{\vert x\vert\rightarrow \infty} \frac{\vert\nabla V(x)\vert^2}{V(x)} = \infty.
   \end{equation}
   
	\item[iv)] (\textit{curvature}) $V$ and $W$ are uniformly strictly convex functions. $V$ has a unique minimum in $m$;     
	\item[v)] (\textit{spherical symmetry}) $W(x)=G(|x|)$ for some function $G$ from $\bRb_+$ to $\bRb$.
\end{enumerate}
\begin{rem}
By the growth condition \eqref{eq:growth}, $\vert\nabla V\vert^2 - \Delta V$ is bounded by below.
\end{rem}
\begin{rem}
    Note, that without loss of generality we can choose polynomials $P$ to be of the form $P(|x|) = C(1 + |x|^k)$. Then the following property holds: there exists a constant $\gamma > 0$ such that $P(|x + y|) \leq \gamma(P(|x|) + P(|y|))$.
\end{rem}

Let us first show existence and uniqueness of the solution to the latter equation.
\begin{proposition}\label{p:existence-W} For any $x\in \mathbb{R}^d$, there exists a
unique global strong solution $(X_t,t\geq 0)$.
\end{proposition}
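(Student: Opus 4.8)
The plan is to obtain a \emph{local} strong solution by a fixed-point argument and then to rule out explosion in finite time, using hypotheses~\textbf{(H)} — in particular the growth conditions~\eqref{eq:growth} and the uniform convexity of~$W$.

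First I would record that the drift in~\eqref{eq:sid} is continuous up to $t=0$ despite the factor $1/t$: by the spherical symmetry of $W$ one has $\nabla W(0)=0$, so for any continuous path $\xi$ the map $t\mapsto\frac1t\int_0^t\nabla W(\xi_t-\xi_s)\,\rmd s$ is bounded near $0$ and extends continuously there by the value $0$. Then, fixing a realisation of $B$ and working pathwise on $\mathcal{C}([0,T_0],\mathbb{R}^d)$, consider for $T_0>0$ the map
\[
(\Gamma\xi)_t=x+\sigma B_t-\int_0^t\nabla V(\xi_r)\,\rmd r-\int_0^t\frac1r\int_0^r\nabla W(\xi_r-\xi_s)\,\rmd s\,\rmd r .
\]
If $\xi$ ranges over the closed ball of radius $R$ around the (continuous, hence bounded on $[0,T_0]$) path $t\mapsto x+\sigma B_t$, then $\Gamma\xi$ stays in that ball for $T_0$ small, since $\nabla V$ and $\nabla W$ are bounded on the relevant compact; moreover $\Gamma$ is a contraction there for $T_0$ small, because $\nabla V,\nabla W\in\mathcal{C}^1$ are Lipschitz on compacts and, for the memory term, $\bigl|\tfrac1r\int_0^r\bigl(\nabla W(\xi_r-\xi_s)-\nabla W(\eta_r-\eta_s)\bigr)\rmd s\bigr|\le 2\,\mathrm{Lip}(\nabla W)\,\|\xi-\eta\|_\infty$. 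The Banach fixed-point theorem then yields a unique strong solution on $[0,T_0]$. Restarting from $T_0$ — where the memory integral is now non-singular and still determined by the past of $X$ — and patching these local solutions produces a unique maximal strong solution on a random interval $[0,e)$, with $e=\lim_{n\to\infty}\tau_n$ and $\tau_n:=\inf\{t\ge 0:|X_t|\ge n\}$.

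It remains to prove $\mathbb{P}(e\le T)=0$ for every $T>0$, i.e.\ $\tau_n\to\infty$ a.s. For this I would apply It\^o's formula to $\Psi(X_{t\wedge\tau_n})$ for a Lyapunov function $\Psi$ built from $V$ (for instance $\Psi=V$, or $\Psi=e^{\lambda V}$ with $0<\lambda<2/\sigma^2$): this produces, besides a local martingale, a drift whose ``$V$-part'' is a positive multiple of $\tfrac{\sigma^2}{2}\Delta V-|\nabla V|^2$ plus the interaction contribution $-\nabla V(X_t)\cdot\tfrac1t\int_0^t\nabla W(X_t-X_s)\,\rmd s$. The $V$-part is exactly what~\eqref{eq:growth} is designed for: since $\Delta V\le aV$ and $|\nabla V|^2/V\to\infty$, one gets $\tfrac{\sigma^2}{2}\Delta V-|\nabla V|^2\le C_0-\tfrac12|\nabla V|^2$ with $C_0$ a constant, so a genuine negative reserve is available. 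The interaction term is the delicate one; here I would use that $W$ is uniformly convex, so that $-\nabla W(x-\cdot)$ acts as a \emph{confining} force: decomposing $X_t-m=(X_t-X_s)+(X_s-m)$ and using $\nabla W(z)\cdot z\ge\alpha|z|^2$ (a consequence of $\inf\nabla^2 W\ge\alpha$ and $\nabla W(0)=0$), the part of the interaction directed along the confining direction has the favourable sign, while the remaining part is controlled by the domination~\eqref{domination}, the estimate $P(|a+b|)\le\gamma(P(|a|)+P(|b|))$, and the $|\nabla V|^2$-reserve. This leads to $\rmd\Psi(X_{t\wedge\tau_n})\le(\text{local martingale})+C_T\bigl(1+\Psi(X_{t\wedge\tau_n})\bigr)\rmd t$ with $C_T$ independent of $n$ (a memory term of the form $\tfrac1t\int_0^t\Psi(X_{s\wedge\tau_n})\,\rmd s$ that may appear is harmless). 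Taking expectations — the localised stochastic integral being a true martingale since $|X_{t\wedge\tau_n}|\le n$ — and applying Gr\"onwall's lemma gives $\sup_{t\le T}\mathbb{E}\bigl[\Psi(X_{t\wedge\tau_n})\bigr]\le\bigl(1+\Psi(x)\bigr)e^{C_T T}$, uniformly in $n$. Since $V$ is uniformly convex with minimum at $m$ we have $\inf_{|y|=n}\Psi(y)\to\infty$, whence $\mathbb{P}(\tau_n\le T)\le\bigl(1+\Psi(x)\bigr)e^{C_TT}/\inf_{|y|=n}\Psi(y)\to 0$; thus $\mathbb{P}(e\le T)=0$ and the solution is global, while uniqueness is inherited from the local step.

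The only genuine difficulty is the non-explosion step, and within it the treatment of the non-Markovian interaction term. A bound using merely $|\nabla W|\le P$ is too crude, because $P$ may grow faster than $V$ — hence faster than the $|\nabla V|^2$-reserve can absorb — so one really has to exploit that, by the uniform convexity of $W$, the self-interaction force $-\tfrac1t\int_0^t\nabla W(X_t-X_s)\,\rmd s$ is confining rather than just polynomially bounded; this is also why the conditions~\eqref{eq:growth} and the uniform strict convexity in~\textbf{(H)} are imposed. The remaining ingredients — the removable singularity at $0$, the fixed-point argument, the patching, and the Gr\"onwall conclusion — are standard, and one may alternatively adapt the existence-uniqueness arguments of~\cite{CK} or~\cite{kk-ejp} to the present setting with $\sigma$ and the confinement $V$.
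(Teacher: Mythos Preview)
Your local-existence step via the pathwise fixed-point construction is more explicit than the paper's (which simply declares this standard and cites~\cite{DR}) and is fine. The gap is in the non-explosion step, precisely at the place you flag as delicate.

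With $\Psi=V$ (or $e^{\lambda V}$), the cross term is $-\nabla V(X_t)\cdot\tfrac1t\int_0^t\nabla W(X_t-X_s)\,\rmd s$, and the decomposition you propose does not apply to it: writing $X_t-m=(X_t-X_s)+(X_s-m)$ together with $\nabla W(z)\cdot z\ge\alpha|z|^2$ would be relevant if the left factor were $X_t-X_s$ or $X_t-m$ (i.e.\ if the Lyapunov function were quadratic), but here the left factor is $\nabla V(X_t)$, and $\langle\nabla V(x),\nabla W(x-y)\rangle$ has no sign in general. Falling back on Cauchy--Schwarz/Young produces $|\nabla W(X_t-X_s)|^2\lesssim P(|X_t|)^2+P(|X_s|)^2$, and since $P$ already dominates $|\nabla V|$ by~\eqref{domination}, the degree of this remainder exceeds that of your reserve $\tfrac12|\nabla V|^2$; the Gr\"onwall argument therefore does not close, and passing to $e^{\lambda V}$ does not help.

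The paper avoids this obstruction by taking as Lyapunov functional the full, time-dependent energy $\mathcal{E}_t(x)=V(x)+\tfrac1t\int_0^t W(x-X_s)\,\rmd s$ and applying the It\^o--Ventzell formula. Because the drift in~\eqref{eq:sid} is exactly $-\nabla\mathcal{E}_t(X_t)$, the interaction contribution is absorbed into a perfect square $-|\nabla\mathcal{E}_t(X_t)|^2\le 0$; the $\partial_t$-contribution $-\tfrac{1}{t^2}\int_0^t W(X_t-X_u)\,\rmd u$ is also nonpositive since $W\ge 0$. One is left with the Laplacian term, bounded via $\Delta V\le aV$ from~\eqref{eq:growth}, and Gr\"onwall gives $\mathbb{E}\,V(X_{t\wedge\tau_n})\le\mathbb{E}\,\mathcal{E}_{t\wedge\tau_n}(X_{t\wedge\tau_n})\le V(x)e^{at}$, hence non-explosion. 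The point is that including $W\ast\mu_t$ in the Lyapunov functional is not cosmetic: it is what removes the cross term you are struggling to bound.
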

\begin{proof}
Local existence and uniqueness of the solution to \eqref{eq:sid} is standard (see for instance~\cite{DR}). 
We only need to prove here that $X$ 
does not explode in a finite time. Let us introduce the increasing sequence of stopping times $\tau_0 = 0$ and $$\tau_n :=
\inf\left\{t\geq \tau_{n-1}; \mathcal{E}_{t}(X_t) + \int_0^t \left\vert\nabla
\mathcal{E}_{s}(X_s)\right\vert^2 \mathrm{d}s > n\right\}$$ where $\mathcal{E}_{t}(X_t) := V(X_t) + \frac{1}{t} \int_0^t W(X_t-  X_s) \rmd s$. 
In order to show that the solution never explodes, we use the Lyapunov
functional $(x,t)\mapsto \mathcal{E}_t(x)$. As the process $(t,x)\mapsto\mathcal{E}_{\mu_t}(x)$ is of class $\mathcal{C}^2$ (in the space variable) and is a $\mathcal{C}^1$-semi-martingale
(in the time variable),
It\^o-Ventzell formula applied to
$(x,t)\mapsto\mathcal{E}_{t\wedge \tau_n}(x)$ implies
\begin{eqnarray}\label{eq:ito}
\mathcal{E}_{t\wedge \tau_n}(X_{t\wedge \tau_n}) = V(x)  + \int_0^{t\wedge \tau_n}
(\nabla \mathcal{E}_{s}(X_s), \mathrm{d}B_s) - \int_0^{t\wedge \tau_n} \left\vert\nabla \mathcal{E}_{s}(X_s)\right\vert^2 \mathrm{d}s\\
+ \frac{\sigma^2}{2}\int_0^{t\wedge \tau_n} \Delta \mathcal{E}_{s}(X_s)\mathrm{d}s
- \int_0^{t\wedge \tau_n}  \int_0^s  W(X_s-  X_u)\rmd u \frac{\mathrm{d}s}{s^2}.\notag
\end{eqnarray}
We note that $\int_0^{t\wedge \tau_n} (\nabla \mathcal{E}_{s}(X_s),
\mathrm{d}B_s)$ is a true martingale.
We then get 
\begin{equation*}
\mathbb{E}\mathcal{E}_{t\wedge \tau_n}(X_{t\wedge \tau_n})
\leq V(x) + a \int_0^{t}
\mathbb{E}\mathcal{E}_{s\wedge \tau_n}(X_{s\wedge \tau_n})
\mathrm{d}s.
\end{equation*}
So, Gronwall's lemma leads to: $$\mathbb{E}V(X_{t\wedge \tau_n})\le \mathbb{E}\mathcal{E}_{t\wedge
\tau_n}(X_{t\wedge \tau_n}) \leq V(x)e^{at}.$$
As $\underset{\vert x\vert\rightarrow \infty}{\lim}V(x) =\infty$, the process $(X_t,t\geq 0)$ does not explode in a finite time and there exists a global strong solution.
\end{proof}

\begin{rem}
A large family of path-dependent process has been studied by Saporito, see for instance~\cite{yuri}. He proves, with his co-authors, existence and uniqueness of such processes. The difference with our process is that we normalize the occupation measure.
\end{rem}

\subsection{Speed of convergence}\label{ss:speed}
Our results are based on the paper of Kleptsyn and Kurtzmann~\cite{kk-ejp}. More precisely, they have proved the following
\begin{theorem}\cite[Theorem 1.6]{kk-ejp}\label{t:main-2}
Let $X$ be the solution to the equation \eqref{eq:sid} with $\sigma=\sqrt{2}$.

Suppose, that $V\in \mathcal{C}^2(\Rn)$ and $W\in \mathcal{C}^2(\Rn)$, satisfy \textbf{(H)} and either $V$ or $W$ is uniformly strictly convex that is there exists $C>0$ such that
$$
\forall x\in \Rn, \forall v\in \Sn , \, \left.\frac{\partial^2 V}{\partial v^2}\right|_{x}\ge C\quad \text{ or } \quad  \forall x\in \Rn, \forall v\in \Sn,\ \left.\frac{\partial^2 W}{\partial v^2}\right|_{x}\ge C.
$$
Then there exists a unique density $\rho_{\infty}:\Rn\to\bbR_+$, such that almost surely
$$
\mu_t = \frac{1}{t}\int_0^t \delta_{X_s}\mathrm{d}s \xrightarrow[t\to+\infty]{*-weakly}
\rho_{\infty}(x) \, \mathrm{d}x.
$$
\end{theorem}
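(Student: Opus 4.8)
The plan is to prove Theorem~\ref{t:main-2} by the ``ODE method'' for self-interacting diffusions of Bena\"im, Ledoux and Raimond~\cite{BLR}, adapted from the compact manifold to $\Rn$ (this is the route of~\cite{kk-ejp}, which in addition makes every estimate quantitative). The first observation is that $\nabla V(X_t)+\frac1t\int_0^t\nabla W(X_t-X_s)\,\rmd s=\nabla\bigl(V+W*\mu_t\bigr)(X_t)$ with $(W*\mu)(x):=\int_{\Rn}W(x-y)\,\mu(\rmd y)$, so~\eqref{eq:sid} is a diffusion with drift $-\nabla U_{\mu_t}$, where $U_\mu:=V+W*\mu$ is the ``frozen'' potential. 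Since $\sigma=\sqrt2$ (so that $\tfrac{\sigma^2}{2}\Delta=\Delta$), the time-homogeneous diffusion with drift $-\nabla U_\mu$ is reversible with unique invariant probability measure
\[
\Pi(\mu)(\rmd x)=\frac{1}{Z(\mu)}\,e^{-U_\mu(x)}\,\rmd x,\qquad Z(\mu)=\int_{\Rn}e^{-U_\mu(y)}\,\rmd y ,
\]
and the density $\rho_\infty$ of the unique fixed point of $\Pi$ will be the claimed limit. Applying the time change $t=e^u$ one checks directly from $\mu_t=\frac1t\int_0^t\delta_{X_s}\rmd s$ that $h_u:=\mu_{e^u}$ solves $\dot h_u=-h_u+\delta_{X_{e^u}}$, hence
\[
\dot h_u=-h_u+\Pi(h_u)+\bigl(\delta_{X_{e^u}}-\Pi(h_u)\bigr),
\]
a perturbation of the deterministic semiflow $\dot h=-h+\Pi(h)$ on $\mathcal{P}(\Rn;P)$. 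So there are two tasks: (a) show the perturbation is asymptotically negligible, so that $(h_u)$ is an asymptotic pseudo-trajectory of this semiflow; (b) show the semiflow admits $\{\rho_\infty\}$ as a global attractor. Combining (a) and (b) with the Bena\"im limit-set theorem then gives $\mu_t\to\rho_\infty(x)\,\rmd x$ ($*$-weakly) almost surely.

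For task (b) I would first check that $\Pi$ maps $\mathcal{P}(\Rn;P)$ into itself continuously: by~\eqref{domination} and $\inf\nabla^2V\ge\rho>0$, the density $e^{-U_\mu}$ has Gaussian-type tails with parameters controlled uniformly on $\|\cdot\|_P$-bounded sets, so $\int P(|y|)\,\Pi(\mu)(\rmd y)<\infty$ with uniform bounds. Uniqueness and global attractivity of the fixed point I would get from the free-energy functional
\[
\mF(\mu)=\int_{\Rn}V\,\rmd\mu+\frac12\iint_{\Rn\times\Rn}W(x-y)\,\mu(\rmd x)\,\mu(\rmd y)+\int_{\Rn}f_\mu\log f_\mu\,\rmd x ,\qquad f_\mu:=\tfrac{\rmd\mu}{\rmd x} ,
\]
showing that $u\mapsto\mF(h_u)$ is non-increasing along the semiflow, strictly unless $h_u$ is a fixed point of $\Pi$ (so $\mF$ is a strict Lyapunov function), while uniform strict convexity of $V$ and $W$ makes $\mF$ strictly convex, hence with a unique critical point, which is therefore its unique minimiser and the unique fixed point $\rho_\infty$; a LaSalle-type argument then promotes $\{\rho_\infty\}$ to the global attractor. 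Alternatively, and this is what produces the explicit convergence rate used later in the paper, a synchronous coupling estimate in the quadratic Wasserstein distance $\mathcal{W}_2$, relying on the uniform Bakry--\'Emery bound $\nabla^2U_\mu\ge\rho\,\mathrm{Id}$, shows $\Pi$ to be a strict contraction.

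Task (a) is where the non-compactness really bites, and I expect it to be the main obstacle. One must trap $(\mu_t)$, almost surely, in a relatively compact subset of $\mathcal{P}(\Rn;P)$, which requires uniform-in-time control of the moments of $X_t$. Here I would refine the Lyapunov computation in the proof of Proposition~\ref{p:existence-W}: the growth hypothesis~\eqref{eq:growth}, in particular $|\nabla V(x)|^2/V(x)\to\infty$, makes the dissipative term $-|\nabla\mathcal{E}_s(X_s)|^2$ dominate $\tfrac{\sigma^2}{2}\Delta V\le\tfrac{\sigma^2}{2}aV$ at infinity, upgrading $\E\mathcal{E}_t(X_t)\le V(x)e^{at}$ to $\sup_{t\ge1}\E\mathcal{E}_t(X_t)<\infty$; since $\rho$-convexity of $V$ together with $\nabla V(m)=0$ gives $V(x)\ge V(m)+\tfrac{\rho}{2}|x-m|^2$ for all $x$, this yields $\sup_{t\ge1}\E|X_t-m|^2<\infty$, and a bootstrap through~\eqref{domination} gives uniform-in-time bounds on all polynomial moments of $X_t$, hence $\sup_{t\ge1}\frac1t\int_0^tP(|X_s|)^q\,\rmd s<\infty$ a.s.\ for every $q$, i.e.\ the uniform integrability needed for $*$-weak relative compactness in $\mathcal{P}(\Rn;P)$.

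Granting the trapping, the perturbation $\delta_{X_{e^u}}-\Pi(h_u)$ is controlled in the standard way via the Poisson equation for the frozen generator $\Delta-\nabla U_\mu\cdot\nabla$: it splits into a martingale increment (carrying the $\sigma\rmd B$), which converges thanks to the moment bounds and the $e^{-u}$ gain from the time change, plus remainders that vanish because on the slow scale $u=\log t$ the diffusion relaxes toward $\Pi(\mu_t)$ with a spectral gap bounded below uniformly over the trapping set (again $\nabla^2U_\mu\ge\rho\,\mathrm{Id}$). This makes $(h_u)$ an asymptotic pseudo-trajectory; its limit set is internally chain transitive and contained in the global attractor $\{\rho_\infty\}$, so $\mu_t$ converges $*$-weakly to $\rho_\infty(x)\,\rmd x$ almost surely. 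On the compact manifold of~\cite{BLR} both the tightness of $(\mu_t)$ and the uniformity of the spectral gap come for free; here they are precisely where the growth conditions~\eqref{eq:growth}--\eqref{domination} and the uniform convexity are spent, and the Lyapunov/moment bound for $X_t$ is the step I expect to be the most delicate.
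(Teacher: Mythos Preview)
The paper does not prove Theorem~\ref{t:main-2}; it is quoted from~\cite{kk-ejp}, and what follows the statement is only a \emph{sketch} of that proof, included so the authors can explain how the constants are later made uniform in~$\sigma$. Your proposal is a valid alternative route---essentially the continuous-time asymptotic-pseudo-trajectory scheme of~\cite{BLR,AK}---but it differs from the sketched argument in two substantive ways. First, \cite{kk-ejp} does not use a Poisson-equation/martingale control of the perturbation $\delta_{X_{e^u}}-\Pi(h_u)$; instead it discretises along a deterministic sequence $T_n$ with $T_n\gg T_{n+1}-T_n\gg1$ (eventually $T_n=n^{3/2}$), couples $X$ on each interval $[T_n,T_{n+1}]$ with the Markov diffusion $Y$ in the frozen potential $V+W*\mu_{T_n}$, and bounds $\mathbb{W}_2\bigl(\mu_{[T_n,T_{n+1}]},\Pi(\mu_{T_n})\bigr)$ directly by coupling and ergodicity estimates. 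Second, and more specific to the non-compact setting, \cite{kk-ejp} does not work with $\mu_t$ but splits the analysis into the \emph{centered} measure $\mu_t^c$ (translated so that the minimiser $c_{\mu_t}$ of $V+W*\mu_t$ sits at the origin) and the center $c_{\mu_t}$ itself: convergence of $\mu_t^c$ to the centered $\rho_\infty$ is obtained from the discretised flow and the free energy, and convergence of the center is handled separately at the end. You do not mention this decoupling; under the full hypothesis~\textbf{(H)} your uniform curvature bound $\nabla^2U_\mu\ge\rho\,\mathrm{Id}$ lets you bypass it, but under the weaker ``$V$ \emph{or} $W$ uniformly convex'' of the theorem as actually stated it is not optional. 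What each route buys: yours is conceptually cleaner and sits inside the abstract Bena\"im limit-set framework; the discretisation-and-coupling of~\cite{kk-ejp} is heavier but is precisely what produces the explicit rate $\mathbb{W}_{2k}(\mu_t,\rho_\infty)=O\bigl(\exp\{-a\sqrt[2k+1]{\log t}\}\bigr)$, and that quantitative bound---together with its uniformity in~$\sigma$---is exactly what the remainder of the present paper (Theorem~\ref{thm:KK12} and the exit-time analysis) relies on.
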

Moreover, if $V$ is symmetric with respect to some point $q$, then the corresponding density $\rho_\infty$ is also symmetric with respect to the same point $q$. Remark that the density $\rho_{\infty}$ is the same limit density as in the result of~\cite{CMV}, uniquely defined by the following property: $\rho_{\infty}$ is a positive function, proportional to~$e^{-(V+W*\rho_{\infty})}$.

 And what is more important, Kleptsyn and Kurtzmann obtained a speed of convergence in the following way. First, let us recall the definition of the Wasserstein distance.
\begin{definition} 
  For $\mu_1,\mu_2 \in \mathcal{P}(\Rn;P)$, the quadratic Wasserstein distance is defined as $$\mathbb{W}_2(\mu_1,\mu_2) := \left(\inf \{ \mathbb{E}(|\xi_1-\xi_2|^2)\}\right)^{1/2},$$ where the infimum is taken over all the random variables such that $\{$law of $\xi_1\}= \mu_1$ and $\{$law of $\xi_2\}= \mu_2$. This corresponds to the minimal $L^2$-distance taken over all the couplings between $\mu_1$ and $\mu_2$.
  
  Similarly, the Wasserstein distance $\mathbb{W}_{2k}$ is defined as 
  $$\mathbb{W}_{2k}(\mu_1,\mu_2) := \left(\inf \{ \mathbb{E}(|\xi_1-\xi_2|^{2k})\}\right)^{1/(2k)}.$$
\end{definition}

More precisely, in \cite[Theorem 1.12]{kk-ejp}, it is proved the existence of a constant $a>0$ such that almost surely, for $t$ large enough one has 
\begin{equation*}
\mathbb{W}_2(\mu_t^c, \rho_\infty) = O(\exp\{-a\sqrt[2k+1]{\log t}\})\,,
\end{equation*}
where $2k$ is the degree of the polynomial $P$, $\mu_t^c$ is the translation of the empirical measure $\mu_t$ such that $\E (\mu_t^c)=0$ and $\mathbb{W}_2$ is the quadratic Wasserstein distance.

Of course, in the case $W(x) = \alpha \frac{|x|^2}{2}$, the polynomial $P$ corresponds to the growth of $V$, and we have to replace the Brownian motion by the rescaled Brownian motion $\sigma B_t$. So that the density $\rho_{\infty}$ is  uniquely defined by the following property: $\rho_{\infty}$ is a positive function, proportional to~$e^{-2(V+W*\rho_{\infty})/\sigma^2}$. Let us sketch the proof of these results and explain how $\sigma$ appears  here.

First, note that the empirical measure $\mu_t=\frac{1}{t}\int_0^t \delta_{X_s} \rm ds$ evolves very slowly. Indeed,  choose a deterministic sequence of times $T_n\to+\infty$, with
$T_n\gg T_{n+1}-T_{n}\gg 1$, and consider the behaviour of the
measures~$\mu_{T_n}$. As $T_n\gg T_{n+1}-T_n$, it is natural to
expect  that the
empirical measures $\mu_t$ on the interval $[T_n,T_{n+1}]$ almost
do not change and thus stay close to $\mu_{T_n}$. So we can approximate, on this
interval, the solution $X_t$ of~\eqref{eq:sid} with $\sigma=\sqrt{2}$ by the solution of the same equation
with $\mu_t\equiv \mu_{T_n}$:
$$
\mathrm{d}Y_t = \sigma\, \mathrm{d}B_t - (\nabla V+\nabla W*\mu_{T_n})(Y_t) \, \mathrm{d}t,\quad t\in [T_n,T_{n+1}],
$$
in other words, by a Brownian motion in a potential~$V+W*\mu_{T_n}$ that does not depend on time.

On the other hand, the series of general term $T_{n+1}-T_n$ increases. So, using Birkhoff Ergodic Theorem,  
we see that the (normalized)
distribution $\mu_{[T_n,T_{n+1}]}$ of values of $X_t$ on these
intervals becomes (as $n$ increases) close to the equilibrium measures
$\Pi(\mu_{T_n})$ for a Brownian motion in the potential
$V+W*\mu_{T_n}$, where 
$$
\Pi(\mu)(\mathrm{d}x) := \frac{1}{Z(\mu,\sigma)} e^{-2(V+W*\mu)(x) /\sigma^2} \,
\mathrm{d}x,\quad Z(\mu,\sigma):=\int_{\Rn} e^{-2(V+W*\mu)(x) /\sigma^2} \,
\mathrm{d}x.$$
As
$$
\mu_{T_{n+1}} = \frac{T_n}{T_{n+1}} \,\mu_{T_n} +
\frac{T_{n+1}-T_n}{T_{n+1}} \,\mu_{[T_n,T_{n+1}]},
$$
 we then have
$$
\mu_{T_{n+1}} \approx \frac{T_n}{T_{n+1}} \,\mu_{T_n} +
\frac{T_{n+1}-T_n}{T_{n+1}} \,\Pi(\mu_{T_n}) = \mu_{T_n} +
\frac{T_{n+1}-T_n}{T_{n+1}} (\Pi(\mu_{T_n})-\mu_{T_n}),
$$
and
$$
\frac{\mu_{T_{n+1}}-\mu_{T_n}}{T_{n+1}-T_n} \approx
\frac{1}{T_{n+1}}(\Pi(\mu_{T_n})-\mu_{T_n}).
$$
This motivates Kleptsyn and Kurtzmann to approximate the behaviour of the measures $\mu_t$
by trajectories of the flow (on the infinite-dimensional space of
measures)
\begin{equation}\label{eq:9-3/4}
\dot{\mu}=\frac{1}{t}(\Pi(\mu)-\mu),
\end{equation}
or after a logarithmic change of variable $\theta=\log t$,
\begin{equation}\label{eq:flow}
\mu'=\Pi(\mu)-\mu.
\end{equation}

Indeed, choose an appropriate interval $[T_n, T_{n+1})$. On this interval, fix the empirical measure $\mu_t$ at $\mu_{T_n}$. Then construct a new process $Y$, coupled with $X$ (the coupling is such that $X$ and $Y$ are driven by the same Brownian motion), such that for all $t\in [T_n, T_{n+1})$, we have $$\mathrm{d}Y_t = \sigma \mathrm{d}B_t - (\nabla V(Y_t) + \nabla W*\mu_{T_n}(Y_t)) \mathrm{d}t.$$
This new process has  two advantages. First, it is Markovian (and its invariant probability measure is $\Pi(\mu_{T_n})(\mathrm{d}x) = \frac{1}{Z} e^{-2(V+W*\mu_{T_n})(x)/\sigma^2} \mathrm{d}x$), and so is easier than $X$ to study. Second, its evolution is very close to the evolution of the desired $X$. Indeed, we will use $Y$ to prove that the transport distance between the empirical measure on $[T_n, T_{n+1}]$, denoted by $\mu_{[T_n, T_{n+1}]}$ in~\cite[Proposition 3.2]{kk-ejp}, and the probability measure $\Pi(\mu_{T_n})$ (both measures being centered in $c_{T_n}$) is controlled by $T_n^{-\frac{1}{3}\min (8C_W,1/5d)}$ and so, this distance vanishes as $n\to +\infty$. This has been done in \cite[\S 3.1.1]{kk-ejp}.\\
After that, remark that if a.s. the empirical measure $\mu_t$ converges weakly* to $\mu_\infty$, then for $t$ large enough, the process $X$ shall be very close to $Z$, defined by $$\mathrm{d}Z_t = \sigma \mathrm{d}B_t - (\nabla V+\nabla W*\mu_{\infty})(Z_t) \mathrm{d}t.$$
The process $Z$ is obviously Markovian and the ergodic theorem can be applied: 
$$\frac{1}{t} \int_0^t \delta_{Z_s} \mathrm{d}s \underset{t\to +\infty}{\longrightarrow} \Pi(\mu_\infty)\quad \text{a.s.}$$ for the weak* convergence of measures. So when the limit $\mu_\infty$ exists, it satisfies
\begin{equation}\label{eq:mu_infty_and_Pi}
\mu_\infty = \Pi(\mu_\infty)\,.
\end{equation}

This explains the idea of introducing the dynamical system \mbox{$\dot{\mu} = \Pi(\mu)-\mu$} (after the time-shift $t\mapsto e^t$ in order to work with a time-homogeneous system) defined on the set of probability measures that are integrable for the polynomial $P$. Note that, instead of considering the latter dynamical system, Kleptsyn and Kurtzmann work with its discretized version, with the knots chosen at the moments $T_n$. They then prove, in~\cite[Proposition 3.5]{kk-ejp}, that the transport distance between the deterministic trajectory induced by the smoothened (discrete) dynamical system and the (centered) random trajectory $\mu_{T_n}$ is controlled and decreases to 0. This has been done in~\cite[\S 3.1.2]{kk-ejp}.\\
Next, it remains to show that the free energy  between this (centered) deterministic trajectory and the set of translates of $\rho_\infty$ goes to 0. We recall that  for the dynamics in presence of an exterior potential $V$, the free energy function is
$$
\mF_{V,W}(\mu) := -\frac{\sigma^2}{2} \mathcal{H}(\mu) + \int_{\Rn} V(x) \mu(x)\, \mathrm{d}x \\
+ \frac{1}{2} \iint_{\Rn\times\Rn} \mu(x) W(x-y) \mu(y)\, \mathrm{d}x\, \mathrm{d}y.
$$
and consider $\mF_{V+W*\mu}= -\frac{\sigma^2}{2} \mathcal{H}(\mu) + \int_{\Rn} (V(x)+W*\mu (x)) \mu(x)\, \mathrm{d}x$ for the energy of ``small parts'', where the entropy of the measure $\mu$ is  
\begin{equation}\label{eq:entropy}
\mathcal{H}(\mu) := -\int_{\Rn} \mu(x) \log\mu(x) \mathrm{d}x.
\end{equation}

As the free energy is controlled by the quadratic Wasserstein distance $\mathbb{W}_2$, this implies that the transport distance between the two previous quantities decreases, as asserted in \cite[Proposition 3.6]{kk-ejp}.\\ 
To conclude, it remains to put all the pieces together and use the triangle inequality: $\mathbb{W}_2 (\mu_t^c,\rho_\infty)$ is upper bounded by the sum of three distances, involving the flow  $\Phi_n$ induced by the discretization of the dynamical system $\dot{\mu} = \Pi(\mu) - \mu$ on the interval $[T_n, T_{n+1})$, for $n$ large enough. The first term of the summation bound will be $\mathbb{W}_2 (\mu_t^c, \Pi(\mu_{T_n}^c))$, the second one $\mathbb{W}_2(\Pi(\mu_{T_n}^c), \Phi_n^n(\mu_{T_n}^c))$ and the third one $\mathbb{W}_2 (\Phi_n^n(\mu_{T_n}^c),\rho_\infty)$.

Finally, the previous decrease estimates will allow Kleptsyn and Kurtzmann to show the convergence of the center, after having made the appropriate choice $T_n = n^{3/2}$.

\subsection{The speed of convergence for the solution of \texorpdfstring{\eqref{eq:sid}}{(\ref{eq:sid})}}
For this paper, the corresponding result of \cite[Theorem 1.12]{kk-ejp} is the following
\begin{theorem}
\label{thm:KK12}
Let $X$ be the solution to the equation \eqref{eq:sid}. There exists a constant $a>0$ such that almost surely, we have for $t$ large enough $\mathbb{W}_{2k}(\mu_t, \rho_\infty) = O(\exp\{-a\sqrt[2k+1]{\log t}\})$, where $2k$ is the degree of the polynomial $P$, $\mu_t$ is  the empirical measure and $\mathbb{W}_{2k}$ is the $2k$-Wasserstein distance, that is the minimal $L^{2k}$-distance taken over all the couplings between $\mu_t$ and $\rho_\infty$ that is the unique density proportional to $e^{-\frac{2}{\sigma^2}(V+W*\rho_\infty)}$.
\end{theorem}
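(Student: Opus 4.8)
The plan is to deduce the statement from \cite[Theorem~1.12]{kk-ejp}, whose proof is recalled in \S\ref{ss:speed}, by a deterministic time-change that absorbs $\sigma$, and then to upgrade the conclusion from $\mathbb W_2$ to $\mathbb W_{2k}$ and from the centered to the uncentered empirical measure. First I would set $\tilde B_s := \tfrac{\sigma}{\sqrt2}B_{2s/\sigma^2}$ — again a standard Brownian motion — and $\tilde X_s := X_{2s/\sigma^2}$. Substituting $t = 2s/\sigma^2$ in \eqref{eq:sid} (and $u = 2v/\sigma^2$ in the occupation integral) shows that $\tilde X$ solves the same self-interacting equation driven by $\sqrt2\,\rmd\tilde B_s$ but with the rescaled potentials $\tilde V := \tfrac2{\sigma^2}V$ and $\tilde W := \tfrac2{\sigma^2}W$. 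These still satisfy \textbf{(H)}: the constant $a$ in $\Delta V\le aV$ is unchanged, a growth polynomial of the same degree $2k$ works, the convexity constants become $\tfrac{2\rho}{\sigma^2}$ and $\tfrac{2\alpha}{\sigma^2}$ (still positive), $|\nabla\tilde V|^2/\tilde V\to\infty$, and spherical symmetry and the uniqueness of the minimiser $m$ persist. Moreover $\tilde\mu_s = \tfrac1s\int_0^s\delta_{\tilde X_v}\,\rmd v = \mu_{2s/\sigma^2}$, and the Gibbs fixed point of the rescaled dynamics is the density proportional to $e^{-(\tilde V + \tilde W*\rho)} = e^{-\frac2{\sigma^2}(V + W*\rho)}$, that is exactly $\rho_\infty$.

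Applying \cite[Theorem~1.12]{kk-ejp} to $\tilde X$, whose hypotheses have just been checked and whose limit is identified as $\rho_\infty$ by Theorem~\ref{t:main-2}, then gives the convergence of $\tilde\mu_s$ to $\rho_\infty$. The point is that its proof — the block coupling with the auxiliary Markov process $Y$ and Birkhoff's theorem, the contraction of the discretised flow \eqref{eq:flow}, and the free-energy decay of \cite[Proposition~3.6]{kk-ejp} — takes place in the weighted space $\mathcal M(\Rn;P)$ with $\deg P = 2k$, and a close reading shows that, once one propagates uniform-in-$n$ bounds on the $P$-moments of $\mu_{T_n}$, each of these three steps is an estimate for $\mathbb W_{2k}$ and not merely for $\mathbb W_2$. (The needed $P$-moment control follows from the Lyapunov bound $\E V(X_t)\le V(x)e^{at}$ of Proposition~\ref{p:existence-W} together with the superquadratic growth $|\nabla V|^2/V\to\infty$; when $W$ is quadratic, $P$ may be taken to be the growth polynomial of $V$ alone, as noted before the statement.) In the same way, the ``convergence of the center'' step of \cite{kk-ejp} (carried out with $T_n = n^{3/2}$) yields that the barycenter $\bar X_t := \tfrac1t\int_0^t X_u\,\rmd u$ of $\mu_t$ converges to the barycenter $b_\infty$ of $\rho_\infty$, again at a rate of the form $e^{-a\sqrt[2k+1]{\log s}}$.

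It remains to undo the time-change and the centering. With $\tilde\mu_s = \mu_t$, $t = 2s/\sigma^2$, one has $\log s = \log t + \log(\sigma^2/2)$, and since $\sqrt[2k+1]{\log t + c} - \sqrt[2k+1]{\log t}\to 0$ the change of time only affects the implicit constant; thus $\mathbb W_{2k}(\mu_t^c,\rho_\infty) = O(e^{-a\sqrt[2k+1]{\log t}})$ and $|\bar X_t - b_\infty| = O(e^{-a\sqrt[2k+1]{\log t}})$ for a possibly smaller $a>0$, where $\mu_t^c := \mu_t(\cdot + \bar X_t)$. Since $\mathbb W_{2k}$ is invariant under a common translation and $\mathbb W_{2k}(\nu(\cdot-\eta),\nu(\cdot-\eta')) = |\eta-\eta'|$, the triangle inequality gives $\mathbb W_{2k}(\mu_t,\rho_\infty)\le |\bar X_t - b_\infty| + \mathbb W_{2k}(\mu_t^c,\rho_\infty) = O(e^{-a\sqrt[2k+1]{\log t}})$, which is the claim. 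Finally, tracking the constants through the first step, the convexity constant of $\tilde W$ is $\tfrac{2\alpha}{\sigma^2}$, so for small $\sigma$ the exponent is governed by a dimension-only quantity and $a$ may be chosen uniformly for $\sigma\in(0,\sigma_0]$ — the uniformity that Section~\ref{s:proof} will use.

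The one step that is not bookkeeping is the middle one: checking that the estimates of \cite{kk-ejp}, written there for $\mathbb W_2$, really control $\mathbb W_{2k}$. This reduces to propagating the $P$-moment bounds along the block construction and to a free-energy/transport comparison at exponent $2k$ (the analogue of the Talagrand and HWI inequalities used there for $\mathbb W_2$), both available from the uniform strict convexity in \textbf{(H)} and the superquadratic growth of $V$; since the rescaling of the first step leaves $2k$ and every convexity and growth exponent unchanged, it introduces no new difficulty.
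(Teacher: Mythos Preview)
Your time-change reduction is correct and is a genuinely different route from the paper's. The paper does not black-box \cite[Theorem~1.12]{kk-ejp}; it \emph{re-runs} that proof with general $\sigma$ in place, checking at each step that the $\sigma$-dependence enters only through factors of the type $e^{-c/\sigma^2}$, which for $\sigma\le 1$ are dominated by their value at $\sigma=1$ (Proposition~\ref{prop:exp-decrease} is given as the representative example). This yields a rate constant $a$ and implicit $O$-constant that are independent of $\sigma$, which is the whole point of the section (cf.\ the abstract and the use of $T_\kappa(\sigma)$ in~\eqref{def:Tkappa}).

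Your route trades that work for a change of variables, but the uniformity does not come for free, and this is where the real gap lies. After rescaling, $\tilde V,\tilde W$ depend on $\sigma$, so applying \cite[Theorem~1.12]{kk-ejp} as a black box produces $\tilde a=\tilde a(\tilde V,\tilde W)=\tilde a(\sigma)$. Your assertion that ``for small $\sigma$ the exponent is governed by a dimension-only quantity'' is exactly the content that needs proof: while convexity improves to $2\rho/\sigma^2$, $2\alpha/\sigma^2$, the growth polynomial, the Lipschitz constants of $\nabla\tilde W$, and the coefficients entering \cite[Propositions~3.2,~3.5,~3.6]{kk-ejp} all scale by $2/\sigma^2$ as well, and one must check that the net effect on $\tilde a$ is favourable --- which is precisely the step-by-step inspection the paper performs. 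Undoing the time-change compounds this: $\log s=\log t+\log(\sigma^2/2)$ with $\log(\sigma^2/2)\to-\infty$, so both the ``$t$ large enough'' threshold and the $O$-constant you absorb it into depend on~$\sigma$.

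Two secondary points. The $\mathbb W_2\to\mathbb W_{2k}$ upgrade via ``Talagrand/HWI at exponent $2k$'' is not a standard tool; a safer route is moment interpolation against the exponential tails of Proposition~\ref{prop:exp-decrease}, which the paper's approach supplies directly. And a \emph{quantitative} rate for the barycentre $|\bar X_t-b_\infty|$ is not stated in \cite{kk-ejp}; extracting it again requires going inside the proof, so the black-box saving is largely illusory.
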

To prove this result, we mimic the proof of \cite[Theorem 1.12]{kk-ejp}, and show that the speed of convergence is less than the one of the case corresponding to $\sigma =1$. We will not reproduce it here. We will only show how we handle this inequality for the following particular result, as it is representative of the difficulty and shows how $\sigma$ appears in the calculation.

Let us prove for instance the exponential decrease for the centered measure $\Pi(\mu)$ and show that, as $\sigma^2 \ll 1$, we can obtain a lower bound of the speed of convergence that does not depend on $\sigma$. We have seen previously that the centered $\mu^c$ has the same asymptotic behaviour as $\Pi(\mu)(\cdot + c_\mu)$, up to a time-scale. An important step to prove the latter result consists in estimating the behaviour of the centered measures $\mu_t^c$. More precisely, one has to prove that the tail of these measures are exponentially decreasing. This is why we have to introduce the following sets:
 \begin{definition}\label{eq:K-alpha-C}
Let $\alpha,C>0$ be given. Define 
\begin{eqnarray}
K_{\alpha,C}^0 &:=& \{\mu \in \mcP(\Rn); \quad \forall R>0, \, \mu(\{y; |y|>R\})<Ce^{-\alpha R}\},\\
K_{\alpha,C} &:=& \{\mu \in \mcP(\Rn); \quad \mu^c \in K_{\alpha,C}^0 \}.
\end{eqnarray}
\end{definition}
We can now prove the exponential decrease of $\Pi(\mu)(\cdot + c_\mu)$.
\begin{proposition} \label{prop:exp-decrease}
There exist $C_W,C_\Pi>0$, two constants  independent of $\sigma$, such that for all $\mu \in \mathcal{P}(\mathbb{R};P)$, we have $\Pi(\mu)(\cdot +c_\mu) \in K^0_{C_W,C_\Pi}$ where $c_\mu$ is defined by the equation $(\nabla V + \nabla W *\mu )(c_\mu) = 0$.
\end{proposition}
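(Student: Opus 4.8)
The plan is to exploit the explicit Gibbs form of $\Pi(\mu)$ together with the uniform strict convexity of $V$ and $W$ to get a Gaussian-type (hence exponential) tail bound on $\Pi(\mu)$ that is uniform in $\sigma$. Write $U_\mu := V + W*\mu$, so that $\Pi(\mu)(\mathrm{d}x) = Z(\mu,\sigma)^{-1} e^{-2U_\mu(x)/\sigma^2}\,\mathrm{d}x$. The key structural fact is that $\nabla^2 U_\mu = \nabla^2 V + (\nabla^2 W)*\mu \ge \rho + \alpha =: 2\lambda > 0$ pointwise, uniformly in $\mu$ and $\sigma$, because convexity is preserved under convolution with a probability measure. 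Moreover, by definition of $c_\mu$ we have $\nabla U_\mu(c_\mu)=0$, so $c_\mu$ is the unique global minimiser of $U_\mu$, and a second-order Taylor estimate with the convexity bound gives
\begin{equation}
U_\mu(x) - U_\mu(c_\mu) \ge \lambda\,|x-c_\mu|^2 \qquad \text{for all } x\in\Rn.
\end{equation}

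First I would use this quadratic lower bound to control the normalising constant from below: translating by $c_\mu$ and bounding the integrand from below is the wrong direction, so instead I bound $Z(\mu,\sigma)$ from below by integrating $e^{-2U_\mu(x)/\sigma^2} \ge e^{-2U_\mu(c_\mu)/\sigma^2} e^{-2\tilde C |x-c_\mu|^2/\sigma^2}$ over a small ball, where $\tilde C$ is a uniform upper bound on $\|\nabla^2 U_\mu\|$ near the minimiser coming from the domination hypothesis \eqref{domination}; this yields $Z(\mu,\sigma) \ge e^{-2U_\mu(c_\mu)/\sigma^2}\cdot c_0\,\sigma^{d}$ for a constant $c_0$ depending only on $d$ and $\tilde C$. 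Next, for the tail, I estimate
\begin{equation}
\Pi(\mu)(\{|y - c_\mu| > R\}) = \frac{1}{Z(\mu,\sigma)} \int_{|x-c_\mu|>R} e^{-2U_\mu(x)/\sigma^2}\,\mathrm{d}x \le \frac{e^{-2U_\mu(c_\mu)/\sigma^2}}{Z(\mu,\sigma)} \int_{|x-c_\mu|>R} e^{-2\lambda |x-c_\mu|^2/\sigma^2}\,\mathrm{d}x.
\end{equation}
Combining with the lower bound on $Z$, the factors $e^{-2U_\mu(c_\mu)/\sigma^2}$ cancel, leaving $\Pi(\mu)(\{|y-c_\mu|>R\}) \le c_0^{-1}\sigma^{-d} \int_{|z|>R} e^{-2\lambda|z|^2/\sigma^2}\,\mathrm{d}z$. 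A standard Gaussian tail estimate in $\Rn$ gives this is $\le C_\Pi e^{-\lambda R^2/\sigma^2}$ for suitable $C_\Pi$; since $\sigma^2 \ll 1$ one has $\lambda R^2/\sigma^2 \ge \lambda R$ for, say, $R \ge 1$ and $\sigma \le 1$, while for $R<1$ the bound is trivial after enlarging $C_\Pi$. This produces a pure exponential tail $C_\Pi e^{-C_W R}$ with $C_W,C_\Pi$ independent of $\sigma$, i.e. $\Pi(\mu)(\cdot + c_\mu) \in K^0_{C_W,C_\Pi}$; note $\E\Pi(\mu)(\cdot+c_\mu)$ need not be exactly $0$, but the argument only requires centering at $c_\mu$, which is the convention used in the statement (the proposition says $c_\mu$, not the mean).

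The main obstacle is making the $\sigma$-dependence genuinely disappear: the naive Gaussian tail bound carries a factor $\sigma^{-d}$ from the normalisation together with $e^{-\lambda R^2/\sigma^2}$ from the tail, and one must check that their product is bounded by a $\sigma$-free exponential $e^{-C_W R}$ uniformly for all small $\sigma$ and all $R>0$ — this is where the hypothesis $\sigma^2 \ll 1$ is used, absorbing the polynomial prefactor $\sigma^{-d}$ into the super-linear gain $R^2/\sigma^2 - R$ in the exponent. A secondary technical point is that the lower bound on $Z$ must be uniform in $\mu$, which requires a uniform-in-$\mu$ upper bound on $\|\nabla^2 U_\mu\|$ in a neighbourhood of $c_\mu$ of fixed radius; this follows from \eqref{domination} once one knows $c_\mu$ stays in a bounded set, which in turn follows from $\nabla U_\mu(c_\mu)=0$ together with the coercivity forced by $\nabla^2 U_\mu \ge 2\lambda$ and the fixed value $\nabla V(m) + $ (bounded interaction term). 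I would isolate that boundedness of $\{c_\mu\}$ as a short preliminary lemma, or invoke it from the earlier discussion, and then the rest is the two displayed estimates above glued by the cancellation of $e^{-2U_\mu(c_\mu)/\sigma^2}$.
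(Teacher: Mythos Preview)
Your overall strategy is sensible, but there is a genuine gap in the uniformity-in-$\mu$ step. You assert that $c_\mu$ stays in a bounded set as $\mu$ ranges over $\mathcal{P}(\Rn;P)$, and you need this to get a uniform upper bound $\tilde C$ on $\|\nabla^2 U_\mu\|$ near $c_\mu$ (hence a uniform $c_0$ in your lower bound for $Z$). That boundedness claim is false. Take the simplest admissible case $V(x)=\tfrac{\rho}{2}|x|^2$, $W(x)=\tfrac{\alpha}{2}|x|^2$: then $\nabla U_\mu(c_\mu)=0$ gives $c_\mu=\tfrac{\alpha}{\rho+\alpha}\,\bar\mu$ with $\bar\mu=\int y\,\mu(\rmd y)$, which is unbounded over $\mathcal{P}(\Rn;P)$. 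In that quadratic example the Hessian happens to be constant, but as soon as $V$ or $W$ has genuinely super-quadratic growth (e.g.\ $V(x)=\tfrac{\rho}{2}|x|^2+|x|^4$, still satisfying \textbf{(H)}), $\|\nabla^2 U_\mu\|$ blows up near $c_\mu$ for $\mu$ with large barycentre, and your constant $c_0\propto\tilde C^{-d/2}$ degenerates. So your $C_\Pi$ is not uniform in $\mu$ as the proposition requires.

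The paper's proof sidesteps this entirely by never referencing the value $U_\mu(c_\mu)$ and never bounding $Z$ directly. Instead it proves the self-normalising inequality
\[
\Pi(\mu)(|x-c_\mu|\ge R)\;\le\;C_\Pi\,e^{-C R}\cdot \Pi(\mu)(|x-c_\mu|\le 2),
\]
which is stronger and in which $Z$ cancels. Working in polar coordinates along each direction $v$, with $f(\lambda)=2U_\mu(c_\mu+\lambda v)$, the core integral $\int_0^2 e^{-f(\lambda)/\sigma^2}\lambda^{d-1}\,\rmd\lambda$ is bounded \emph{below} by $C_1\,e^{-f(2)/\sigma^2}$ using only that $f$ is nondecreasing on $[0,\infty)$ (which follows from $f'(0)=0$ and $f''\ge C$); no upper Hessian bound enters. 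The tail integral is bounded \emph{above} using the linear minorant $f(\lambda)\ge f(2)+2C(\lambda-2)$ for $\lambda\ge2$, again coming only from the uniform lower bound on $f''$. The common factor $e^{-f(2)/\sigma^2}$ cancels, and every remaining constant depends only on $d$ and the convexity constant $C$, hence is uniform in $\mu$ and $\sigma$. The essential difference from your argument is the choice of reference level: the paper pivots at the sphere of radius $2$ rather than at the centre $c_\mu$, and monotonicity alone controls the core relative to that pivot.
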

\begin{proof}
Let us fix $R>0$. Note first that, imposing a condition $C_\Pi\ge e^{2C_W}$, we can restrict ourselves only to $R\ge 2$: for $R<2$, the estimate is obvious. 

The measure $\Pi(\mu)$ has the density $\frac{1}{Z(\mu,\sigma)} e^{-2(V+W*\mu)(x)/\sigma^2}$. To avoid working with the normalization constant $Z(\mu,\sigma)$, we will prove a stronger inequality, that is 
\begin{equation}\label{eq:exp-dec}
\Pi(\mu)(|x-c_\mu|\ge R)\le
C_\Pi e^{-C R}\cdot \Pi(\mu)(|x-c_\mu|\le 2),
\end{equation}
which is equivalent to $$\int_{|x-c_\mu|\ge R} e^{-2(V+W*\mu)(x) /\sigma^2} \mathrm{d}x \le C_\Pi e^{-C R} \int_{|x-c_\mu|\le 2} e^{-2(V+W*\mu)(x)/\sigma^2} \mathrm{d}x.$$ 
We use the polar coordinates, centered at the center $c_\mu$, and so we want to prove that 
\begin{eqnarray*}
& & \int_{\mathbb{S}^{d-1}} \int_R^{+\infty} e^{-2(V+W*\mu) (c_\mu + \lambda v) / \sigma^2}\lambda^{d-1}\mathrm{d}\lambda \mathrm{d}v \\
& & \quad \le   C_\Pi e^{-C R} \int_{\mathbb{S}^{d-1}} \int_0^2 e^{-2(V+W*\mu) (c_\mu + \lambda v)/\sigma^2}\lambda^{d-1}\mathrm{d}\lambda \mathrm{d}v.
\end{eqnarray*}
It suffices to prove such an inequality ``directionwise'': for all $v\in \mathbb{S}^{d-1}$, for all $R\ge 2$ 
$$\int_R^{+\infty} e^{-2(V+W*\mu) (c_\mu + \lambda v) /\sigma^2}\lambda^{d-1}\mathrm{d}\lambda \le C_\Pi e^{-C R} \int_0^2 e^{-2(V+W*\mu) (c_\mu + \lambda v)/\sigma^2}\lambda^{d-1}\mathrm{d}\lambda.$$ But from the uniform convexity of $V$ and $W$ and the definition of the center, the function $f(\lambda) = 2(V+ W*\mu) (c_\mu + \lambda v)$ satisfies $f'(0) =0$ and $\forall r>0$, $f''(r)\ge C$. Hence, $f$ is monotone increasing on $[0,+\infty)$, and in particular, 
\begin{equation}\label{eq:coord-polaires}
\int_0^2 e^{-f(\lambda)/\sigma^2} \lambda^{d-1} \mathrm{d}\lambda \ge e^{-f(2)/\sigma^2} \int_0^2 \lambda^{d-1}\mathrm{d}\lambda =: C_1 e^{-f(2)/\sigma^2}.
\end{equation}
On the other hand, for all $\lambda \ge 2$, $f'(\lambda) \ge f'(2)\ge 2C$, and thus $f(\lambda)\ge 2C (\lambda -2) + f(2)$. Hence, as $\sigma^2\ll 1$, we have 
\begin{eqnarray}
\int_R^{+\infty} e^{-f(\lambda)/\sigma^2} \lambda^{d-1} \mathrm{d}\lambda & \le & e^{-f(2)/\sigma^2} \int_R^{+\infty} \lambda^{d-1} e^{-2C(\lambda-2)/\sigma^2} \mathrm{d}\lambda \notag\\
& \le & e^{-\frac{f(2)}{\sigma^2}} \int_R^{+\infty} \lambda^{d-1} e^{-2C(\lambda-2)} \mathrm{d}\lambda.
\end{eqnarray}
So finally, using that $\sigma\ll 1$
\begin{eqnarray}\label{eq:coord-polaires-2}
\int_R^{+\infty} e^{-f(\lambda)/\sigma^2} \lambda^{d-1} \mathrm{d}\lambda \le C_2 R^{d-1} e^{-2C R} \cdot e^{-\frac{f(2)}{\sigma^2}} \le C_3 e^{-CR} \cdot e^{-\frac{f(2)}{\sigma^2}}.
\end{eqnarray}
Comparing~\eqref{eq:coord-polaires} and~\eqref{eq:coord-polaires-2}, we obtain the desired exponential decrease. Indeed, as $\sigma$ only appears in the exponential under the form $\exp (-const /\sigma^2)$, we can upper bound the previous quantity by $e^{-const}$, so that the constant $C_W$ is upper bounded by a constant that is independent of $\sigma$. By an abuse of notation, we call again this constant $C_W$.
\end{proof}
\begin{rem}
This result corresponds to \cite[Proposition 2.9]{kk-ejp} for the case $\sigma =1$.
\end{rem}

\subsection{Small-noise limit}
In the following, we remind the reader that we don't emphasize the dependence on $\sigma$, but it will appear everywhere in the computations.  

As was mentioned above, the invariant probability measure of self-interacting diffusion is the unique solution to the equation
\begin{equation*}
    \rho_\infty = \Pi(\rho_\infty),
\end{equation*}
where $\Pi(\mu)(x) =  e^{-2(V+W*\mu)(x) /\sigma^2} / \int e^{-2(V+W*\mu)(z) /\sigma^2} \rmd z$. The same invariant probability measure appears in the self-stabilizing diffusion, small-noise limit of which was studied in \cite{HT-EJP}. There, authors studied the case of double-wells potentials which is more general then our diffusion. In this paper the result, that can be transformed in our context as following, was proved. If the moments of invariant probability measures $\rho_\infty$ are uniformly bounded with respect to $\sigma$, then $\delta_m$ is the weak-* limit of $\rho_\infty$ with $\sigma \to 0$ a.s. Note, that indeed, moments of $\rho_t$ are uniformly bounded, since $\mu_t \in K_{\alpha, C}$ for any $t > 0$ and some $\alpha, C$ that do not depend on $\sigma$. 

Thus, consider the following deterministic time, representing the time of stabilization of the occupation measure, if it occurs, around its supposed limit $\delta_m$: 
\begin{equation}
\label{def:Tkappa}
T_\kappa(\sigma):=\inf\left\{t_0\geq0\,\,:\,\,\forall t\geq t_0,\,\mathbb{E}\left(\mathbb{W}_{2k}\left(\mu_t; \delta_m\right)\right)\leq\kappa\right\}\,.
\end{equation}

First, let us discuss why expectation $\E \left(\mathbb{W}_{2k}\left(\mu_t; \delta_m\right)\right)$ exists in the first place. To show that, we use the fact that $\mu_t \in K_{\alpha, C}$ almost surely and get
\begin{equation*}
    \mathbb{W}_{2k}\left(\mu_t; \delta_m\right) \leq \left( 2^{2k - 1} \int |x|^{2k} \mu_t(\rmd x) + 2^{2k - 1} |m|^{2k}\right)^{1/(2k)} \leq \text{Const},
\end{equation*}
where the last constant depends only on $\alpha, C, m$ and $k$. Therefore, since the random variable is bounded by a constant almost surely, expectation exists.

Second, let us show that the definition of the time $T_\kappa(\sigma)$ makes sense. Indeed, 
\begin{equation*}
    \E\mathbb{W}_{2k}\left(\mu_t; \delta_m\right) \leq \E\mathbb{W}_{2k}\left(\mu_t; \rho_\infty\right) + \E\mathbb{W}_{2k}\left(\rho_\infty; \delta_m \right) \xrightarrow[\substack{t \to \infty \\ \sigma \to 0}]{} 0,
\end{equation*}
where the limit is not just iterated, but holds for the pair $(t, \sigma)$, since the speed of convergence of $\mu_t$ towards $\rho_{\infty}$ in time does not depend on $\sigma$, which was shown in Theorem \ref{thm:KK12}. Therefore, for any $\kappa > 0$ we can find $\sigma_0$ small enough and $t_0$ big enough such that $T_\kappa(\sigma) < T_\kappa < \infty$ for any $\sigma < \sigma_0$, which does not only prove existence and finiteness of $T_\kappa(\sigma)$, but also its uniformness with respect to $\sigma$.

\section{Proof of Theorem \ref{dell}} \label{s:proof}
In this section, we prove our main result. First, we show that the process $X$ solution to \eqref{eq:sid} is close to the solution of a deterministic flow $(\psi_t)_{t\ge 0}$ in~\S\ref{s:majoration}. Using that, we prove in~\S\ref{s:leaving} that the probability of leaving a positively invariant domain before the empirical mean remains stuck in the ball of center $m$ and radius $\kappa$ vanishes as $\sigma$ goes to zero. Then, we consider the coupling between the studied diffusion and the one where the empirical measure is frozen to $\delta_m$ and we show that these diffusions are close in~\S\ref{ss:coupling}. We conclude the proof in~\S\ref{ss:conclusion}.
 
\subsection{Upper bound}\label{s:majoration}
We remind the reader that in this work, the noise vanishes. Consequently, it is natural to introduce the deterministic flow $(\psi_t)_t$ defined by the following
\begin{equation}
\dot{\psi}_t = -\nabla V(\psi_t) -\frac{1}{t}\int_0^t \nabla W(\psi_t-\psi_s ) \rmd s, \quad \quad \psi_0 = x_0.
\end{equation}
We will show that $X_t$ and $\psi_t$ are uniformly close while the noise goes to zero. Namely
\begin{proposition} 
For any $\xi>0$ and for any $T>0$, we have:
\begin{equation}
\label{apple}
\lim_{\sigma\to 0}\mathbb{P}\left(\sup_{t\in [0;T]}\left|\left|X_t-\psi_t(x_0)\right|\right|^2>\xi\right)=0\,.
\end{equation}
\end{proposition}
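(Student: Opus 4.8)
The plan is to compare $X$ with $\psi(x_0)$ directly through their integral equations, after localising so that both paths remain in a large ball, and then to run a Gronwall estimate whose only random input is $\sigma\sup_{[0,T]}\lvert B\rvert$. Write, for $t\in[0,T]$,
\[
X_t-\psi_t(x_0)=\sigma B_t-\int_0^t\big(\nabla V(X_s)-\nabla V(\psi_s)\big)\,\rmd s-\int_0^t\frac1s\int_0^s\big(\nabla W(X_s-X_u)-\nabla W(\psi_s-\psi_u)\big)\,\rmd u\,\rmd s ,
\]
the integrands being continuous at $s=0$ since $W=G(\lvert\cdot\rvert)\ge0$ forces $\nabla W(0)=0$. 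As $\psi$ is deterministic there is $M_T$ with $\sup_{t\le T}\lvert\psi_t(x_0)\rvert\le M_T$; fix $N>M_T$ and set $\tau_N:=\inf\{t\ge0:\lvert X_t\rvert\ge N\}$. On $[0,\tau_N\wedge T]$ the arguments of $\nabla V$ stay in $\overline{B(0,N)}$ and those of $\nabla W$ in $\overline{B(0,2N)}$, so by \eqref{domination} both gradients are Lipschitz there with a common constant $L_N:=P(2N)$ (taking $P$ nondecreasing, without loss of generality).

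Let $h(t):=\sup_{s\le t}\lvert X_s-\psi_s(x_0)\rvert$. Bounding the $\nabla V$–term by $L_N\int_0^t h(s)\,\rmd s$, and in the interaction term using $\lvert\nabla W(X_s-X_u)-\nabla W(\psi_s-\psi_u)\rvert\le L_N(\lvert X_s-\psi_s\rvert+\lvert X_u-\psi_u\rvert)$ together with the elementary estimate $\frac1s\int_0^s\lvert X_u-\psi_u\rvert\,\rmd u\le h(s)$, one obtains, for every $t\le\tau_N\wedge T$,
\[
h(t)\ \le\ \sigma\sup_{s\le T}\lvert B_s\rvert\ +\ 3L_N\int_0^t h(s)\,\rmd s ,
\]
so Gronwall's lemma yields $h(\tau_N\wedge T)\le\sigma\,e^{3L_NT}\sup_{s\le T}\lvert B_s\rvert$. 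In particular, on the event $\{\tau_N>T\}$ one has $\tau_N\wedge T=T$ and hence
\[
\sup_{t\le T}\big\lvert X_t-\psi_t(x_0)\big\rvert\ \le\ \sigma\,C_{N,T}\sup_{s\le T}\lvert B_s\rvert,\qquad C_{N,T}:=e^{3L_NT}.
\]

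It remains to make $\{\tau_N\le T\}$ small uniformly in $\sigma$. Applying the Lyapunov computation from the proof of Proposition~\ref{p:existence-W} at the stopping time $T\wedge\tau_N$ — the stochastic integral is a true martingale there since $X$ stays bounded up to $\tau_N$ — gives $\mathbb{E}\big[V(X_{T\wedge\tau_N})\big]\le V(x_0)e^{aT}$, a bound that may be taken uniform over $\sigma\in(0,1]$. By path-continuity $\lvert X_{\tau_N}\rvert=N$ on $\{\tau_N\le T\}$, so
\[
\mathbb{P}(\tau_N\le T)\ \le\ \frac{V(x_0)e^{aT}}{\inf_{\lvert x\rvert=N}V(x)}\ \xrightarrow[N\to\infty]{}\ 0 ,
\]
uniformly in $\sigma\le1$, because $V(x)\to\infty$ as $\lvert x\rvert\to\infty$. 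Given $\xi,\varepsilon>0$, first choose $N$ with $\mathbb{P}(\tau_N\le T)<\varepsilon/2$, then $\sigma_0>0$ such that $\mathbb{P}\big(\sigma C_{N,T}\sup_{s\le T}\lvert B_s\rvert>\sqrt{\xi}\big)<\varepsilon/2$ for $\sigma<\sigma_0$ (legitimate since $\sup_{s\le T}\lvert B_s\rvert<\infty$ a.s. and $\sigma\to0$); off the union of these two events $\sup_{t\le T}\lvert X_t-\psi_t(x_0)\rvert^2\le\xi$, which is exactly \eqref{apple}.

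The only genuinely delicate point is the uniform-in-$\sigma$ tightness, i.e. that $\sup_{\sigma\le1}\mathbb{P}(\tau_N\le T)\to0$ as $N\to\infty$: it rests on the fact that the Lyapunov functional $(x,t)\mapsto\mathcal{E}_t(x)$ controls $X$ with a growth rate that does not deteriorate as $\sigma$ decreases (the noise enters only through the prefactor $\tfrac{\sigma^2}{2}$ of the second–order term), so that the estimate of Proposition~\ref{p:existence-W} holds with a $\sigma$-free right-hand side on $(0,1]$. Everything else — the local Lipschitz bounds, the $\frac1s\int_0^s\le\sup$ trick that tames the normalised interaction kernel, and Gronwall — is routine.
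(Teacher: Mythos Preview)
Your proof is correct and takes a genuinely different route from the paper's. The paper localises with the stopping time $\mathcal{T}=\inf\{t:|X_t-\psi_t|^2\ge\xi\}$ (a tube around the deterministic orbit), applies It\^o's formula to $|X_t-\psi_t|^2$, controls the martingale term by the BDG inequality, and runs Gronwall on $t\mapsto\E\sup_{[0,t\wedge\mathcal{T}]}|X-\psi|^2$ to obtain an $O(\sigma^2)$ bound; Markov's inequality then gives $\mathbb{P}(\mathcal{T}\le T)\to0$ directly, since $\mathcal{T}$ is exactly the bad event. You instead localise in a fixed large ball via $\tau_N$, work pathwise with the integral equation, and get the explicit bound $\sup_{[0,\tau_N\wedge T]}|X-\psi|\le\sigma e^{3L_NT}\sup_{[0,T]}|B|$ with no BDG or It\^o formula; the price is that you must separately establish $\sup_{\sigma\le1}\mathbb{P}(\tau_N\le T)\to0$, for which you borrow the Lyapunov estimate from Proposition~\ref{p:existence-W}. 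Your argument is more elementary and yields a transparent pathwise control in terms of $\sup|B|$; the paper's is self-contained (no appeal to the existence proof) and delivers a quantitative $O(\sigma^2)$ rate in expectation.
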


 \begin{proof}

First of all, we fix some $\xi$ and introduce the following stopping time $\mathcal{T} := \inf\{t: |X_t^\sigma - \psi_t|^2 \geq \xi\}$. We apply Itô formula and get the following result, for  $\omega \in \{\mathcal{T} > t\}$ (the choice of this event will be clear further) : 
\begin{equation*}\label{eq:X_t-psi_t}
\begin{aligned}
    |X_t - \psi_t|^2 &= 2\int_{0}^t (X_s - \psi_s, \rmd{X_s}-\rmd{\psi_s}) + d \sigma^2 t \\
    & \leq d\sigma^2t -2C_V \int_0^t |X_s - \psi_s|^2 \rmd s + \sigma \int_0^t (X_s - \psi_s , \rmd B_s) \\
    & \quad - \int_0^t\frac{1}{s} \int_0^s (X_s - \psi_s, \nabla W(X_s - X_z) - \nabla W(\psi_s - \psi_z))\rmd z\rmd s.
 \end{aligned}
\end{equation*}   
Let $\text{Lip}_{\nabla W}^{K^\prime}$ be a Lipschitz constant of $\nabla W$ inside the following compact\\ $K^\prime:= \{x: |x - \psi_t|^2 \leq \xi \text{, for some } t > 0\}$ (due to our assumptions, this set is indeed a compact at least for small $\xi$, which we can decrease without loss of generality), and $C_V$ is the convexity constant of $V$. We thus have   
\begin{eqnarray}\label{eq:X_t-psi_tbis}
    |X_t - \psi_t|^2    
    & \leq & d\sigma^2 t - 2C_V\int_0^t |X_s - \psi_s|^2\rmd s + \sigma \int_0^t(X_s - \psi_s , \rmd B_s)\notag \\
    &+& \text{Lip}_{\nabla W}^{K^\prime}\int_0^t\frac{1}{s}\int_0^s \big(|X_s - \psi_s|^2 + |X_s - \psi_s|\cdot|X_z - \psi_z|\big) \rmd z\notag \\
    & \leq & d\sigma^2 t - 2C_V\int_0^t |X_s - \psi_s|^2\rmd s + \sigma \int_0^t(X_s - \psi_s , \rmd B_s) \\
    &+ & \frac{\text{Lip}_{\nabla W}^{K^\prime}}{2}\int_0^t\frac{1}{s}\int_0^s \big(3|X_s - \psi_s|^2 + |X_z - \psi_z|^2\big) \rmd z.\notag
\end{eqnarray}
Note then, that by the BDG inequality we get for some constant $C > 0$:
\begin{equation*}
\begin{aligned}
    \E \left(\sup_{[0,t \wedge \mathcal{T}]} \left|\sigma \int_0^s (X_z-\psi_z, \rmd B_z )\right| \right) &\le C \sigma^2 \E \sqrt{\int_0^{t\wedge \mathcal{T}} |X_s - \psi_s|^2 \rmd s}\\
    & \le C \sigma^2 \sqrt{\int_0^t \E \left(\sup_{z \in [0, s \wedge \mathcal{T}]}(|X_z - \psi_z|^2)\right) \rmd s}. 
\end{aligned}
\end{equation*}

Let us consider the following random variable $\sup_{s \in [0; t \wedge \mathcal{T}]} |X_s - \psi_s|^2$. The fact that we consider the supremum before time $t \wedge \mathcal{T}$ gives us that for any $\omega$ we consider only such $s$, that $s \leq \mathcal{T}(\omega)$, which in turn means that we can apply estimation \eqref{eq:X_t-psi_tbis} for any $s \in [0, t\wedge \mathcal{T}]$. We also remind that $t \leq T$ and derive:
\begin{equation*}
\begin{aligned}
    \E \left(\sup_{s \in [0; t \wedge \mathcal{T}]} |X_s - \psi_s|^2\right) &\leq d\sigma^2T+ C \sigma^2 \sqrt{\int_0^t \E \left(\sup_{z \in [0 , s \wedge \mathcal{T}]} (|X_z - \psi_z|^2)\right) \rmd z} \\
    &\quad + 2 \text{Lip}_{\nabla W}^{K^\prime} \int_0^t \E \left(\sup_{z \in [0, s \wedge \mathcal{T}]} |X_z - \psi_z|^2\right) \rmd s \\
    &\leq  d\sigma^2T + \frac{C \sigma^2}{2} \left[1 +  T\E\left( \sup_{s \in [0 , t \wedge \mathcal{T}]} (|X_s - \psi_s|^2)\right)  \right] \\
    & \quad + 2 \text{Lip}_{\nabla W}^{K^\prime} \int_0^t \E \left(\sup_{z \in [0, s \wedge \mathcal{T}]} |X_z - \psi_z|^2\right) \rmd s,
\end{aligned}
\end{equation*}
where in the last inequality we used $\sqrt{x} \leq (1 + x)/2$. Now, if we denote\\ $u_t := \E \left(\sup_{s \in [0; t \wedge \mathcal{T}]} |X_s - \psi_s|^2\right)$, we have
\begin{equation*}
    u_t \leq \frac{1}{1 - C T \sigma^2/2} \left(\frac{2Td + C}{2}\sigma^2 + 2 \text{Lip}_{\nabla W}^{K^\prime} \int_0^t u_s \rmd s \right),
\end{equation*}
for small enough $\sigma$ (such that $1 - C T \sigma^2/2 > 0$). Thus, using Gr\"onwall lemma, we get
\begin{equation}\label{eq:ut_bound}
    u_t \leq \frac{(2Td + C) \sigma^2}{2(1 - C T \sigma^2/2)}\exp\left\{\frac{\text{Lip}_{\nabla W}^{K^\prime} }{1 - C T \sigma^2/2} T\right\} = O(\sigma^2).
\end{equation}
This in particular means, that $\E\left( \sup_{s \in [0; T \wedge \mathcal{T}]} |X_s - \psi_s|^2 \right) \leq O(\sigma^2)$. Nevertheless, to show the necessary result, we have to get rid of the stopping time $\mathcal{T}$ in the previous equation. It is sufficient to show, that $\mathbb{P} (\mathcal{T} \leq T) \xrightarrow[\sigma \to 0]{} 0$. 

Indeed, by its definition, $\mathcal{T}$ is the first time when the difference $|X_t - \psi_t|^2$ reaches $\xi$. But under the assumption $\mathcal{T} \leq T$ and due to \eqref{eq:ut_bound}, by decreasing $\sigma$ we can control $|X_t - \psi_t|^2$ and make it small enough, such that $|X_{\mathcal{T}} - \psi_{\mathcal{T}}|^2 < \xi$ (in some sense), which contradicts the definition of $\mathcal{T}$. Rigorously, 
\begin{equation*}
    \mathcal{T} < T \Rightarrow \sup_{[0, \mathcal{T}\wedge T]} |X_s - \psi_s|^2 = \sup_{[0, \mathcal{T}]} |X_s - \psi_s|^2 \ge \xi.
\end{equation*}
Thereby, 
\begin{equation*}
    \mathbb{P} (\mathcal{T} < T) \leq \mathbb{P}(\sup_{[0, \mathcal{T}\wedge T]} |X_s - \psi_s|^2 \ge \xi) \leq O(\sigma^2),
\end{equation*}
by Markov inequality.

To conclude the proof of the Proposition, we consider
\begin{equation*}
\begin{aligned}
    \mathbb{P}\left(\sup_{t\in [0;T]}|X_t-\psi_t(x_0)|^2>\xi\right) & \leq \mathbb{P}\left(\sup_{t\in [0;T]}|X_t-\psi_t(x_0)|^2>\xi, \mathcal{T} > T\right) \\
    & \quad\quad + \mathbb{P} \left( \mathcal{T} \leq T \right) \\
    & \leq \mathbb{P}\left(\sup_{t\in [0;T \wedge \mathcal{T}]}|X_t-\psi_t(x_0)|^2>\xi \right) + O(\sigma^2) \\
    & \leq O(\sigma^2),
\end{aligned}
\end{equation*}
by Markov inequality and \eqref{eq:ut_bound}, which completes the proof.
\end{proof}

\subsection{Probability of leaving before \texorpdfstring{$T_{\kappa}(\sigma)$}{Tkappasigma}}\label{s:leaving}

Remind that we denoted in~\eqref{def:Tkappa} by $T_{\kappa}(\sigma)$ the first time at which the expectation of the $2k$-Wasserstein distance between the occupation measure of the process and $\delta_m$ is smaller than $\kappa$. By $\mathbb{B}(m;\kappa)$, we denote the ball of center $\delta_m$ and radius $\kappa$ for $\mathbb{W}_{2k}$.
\begin{proposition}
\label{craie}
We put $\tau:=\inf\{t\geq0\,\,:\,\,X_t\notin\mathcal{D}\}$ where $\mathcal{D}$ is a domain that is positively invariant for the flow $x\mapsto-\nabla V(x)-\nabla W(x-m)$. For any $\kappa>0$, 
\begin{equation}
\label{apple2}
\lim_{\sigma\to0}\mathbb{P}\left(\tau\leq T_\kappa(\sigma)\right)=0\,.
\end{equation}
\end{proposition}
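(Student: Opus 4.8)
The plan is to turn \eqref{apple2} into a small‑noise escape estimate on a \emph{fixed} time window and then invoke the comparison with the deterministic flow $(\psi_t)_t$ of~\S\ref{s:majoration}. From the small‑noise discussion we know there is a finite $T_\kappa$, \emph{not depending on} $\sigma$, and a $\sigma_0>0$ with $T_\kappa(\sigma)<T_\kappa$ for all $\sigma<\sigma_0$. Hence, for such $\sigma$,
\begin{equation*}
\mathbb{P}\bigl(\tau\le T_\kappa(\sigma)\bigr)\le\mathbb{P}\bigl(\tau\le T_\kappa\bigr)=\mathbb{P}\bigl(\exists\,t\in[0,T_\kappa]:X_t\notin\mathcal{D}\bigr),
\end{equation*}
and it suffices to show the right‑hand side tends to $0$ as $\sigma\to0$.

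The deterministic ingredient I would establish is that the orbit $\{\psi_t(x_0):t\ge0\}$ stays in $\mathcal{D}$. First it stays in a $\sigma$‑free compact set: by the noiseless It\^o--Ventzell identity behind Proposition~\ref{p:existence-W}, the functional $t\mapsto\mathcal{E}_t(\psi_t)=V(\psi_t)+\tfrac1t\int_0^tW(\psi_t-\psi_s)\,\rmd s$ obeys
\begin{equation*}
\frac{\rmd}{\rmd t}\,\mathcal{E}_t(\psi_t)=-\bigl|\nabla\mathcal{E}_t(\psi_t)\bigr|^2-\frac1{t^2}\int_0^tW(\psi_t-\psi_u)\,\rmd u\le0
\end{equation*}
since $W\ge0$, so $V(\psi_t)\le\mathcal{E}_t(\psi_t)$ remains bounded and $\psi$ stays in a fixed sublevel set of $V$. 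To upgrade this to confinement in $\mathcal{D}$ I would compare $\psi$ with the solution $\varrho$ of $\dot\varrho_t=-\nabla V(\varrho_t)-\nabla W(\varrho_t-m)$, $\varrho_0=x_0$, which lies in $\mathcal{D}$ by the positive‑invariance hypothesis: writing $g(t):=|\psi_t-\varrho_t|$, uniform convexity of $V$ and $W$ yields $g'(t)\le-(\rho+\alpha)g(t)+L\cdot\tfrac1t\int_0^t|\psi_s-m|\,\rmd s$ with $L$ a local Lipschitz constant of $\nabla W$, and a Gr\"onwall/self‑consistency argument (using $|\psi_s-m|\le g(s)+|\varrho_s-m|\le g(s)+|x_0-m|$) bounds $\sup_{t\ge0}g(t)$ by a multiple of $|x_0-m|$; for $x_0$ in (or close enough to) the basin of $m$ this keeps $\psi_t$ inside $\mathcal{D}$, while for $x_0=m$ one simply has $\psi_t\equiv m\in\mathcal{D}$. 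Granting this, $\mathcal{K}:=\{\psi_t(x_0):t\in[0,T_\kappa]\}$ is compact and contained in the open set $\mathcal{D}$, so $\eta:=\mathrm{dist}(\mathcal{K},\mathbb{R}^d\setminus\mathcal{D})>0$ and is independent of $\sigma$.

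To conclude I would apply \eqref{apple} with $\xi=\eta^2$ and $T=T_\kappa$: on $A_\sigma:=\{\sup_{t\in[0,T_\kappa]}\|X_t-\psi_t(x_0)\|^2<\eta^2\}$ one has $X_t\in B(\psi_t(x_0),\eta)\subseteq\mathcal{D}$ for all $t\le T_\kappa$, so $\tau>T_\kappa$; thus $\{\tau\le T_\kappa\}\subseteq A_\sigma^c$ and
\begin{equation*}
\mathbb{P}\bigl(\tau\le T_\kappa(\sigma)\bigr)\le\mathbb{P}(A_\sigma^c)=\mathbb{P}\Bigl(\sup_{t\in[0,T_\kappa]}\|X_t-\psi_t(x_0)\|^2\ge\eta^2\Bigr)\xrightarrow[\sigma\to0]{}0 .
\end{equation*}

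I expect the only genuine obstacle to be the deterministic confinement step: the hypothesis gives positive invariance of $\mathcal{D}$ only for the \emph{frozen} field $-\nabla V-\nabla W(\cdot-m)$, whereas $\psi$ follows the self‑interacting field in which $\delta_m$ is replaced by the a priori spread‑out occupation measure $\tfrac1t\int_0^t\delta_{\psi_s}\,\rmd s$; controlling this discrepancy on $[0,T_\kappa]$ — via the Gr\"onwall comparison above, fed by the monotonicity of $\mathcal{E}_t(\psi_t)$ and the uniform convexity — is the technical heart. The remaining steps are the routine small‑noise pattern.
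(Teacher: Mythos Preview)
Your skeleton is exactly the paper's: reduce to the $\sigma$-free window $[0,T_\kappa]$ via $T_\kappa(\sigma)\le T_\kappa$, use that the deterministic self-interacting orbit $\{\psi_t(x_0):t\in[0,T_\kappa]\}$ sits inside the open set $\mathcal{D}$ at positive distance from the boundary, and then invoke \eqref{apple} to conclude.

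The one place you diverge is precisely the place you flag as ``the only genuine obstacle''. The paper does \emph{not} prove that $\psi$ stays in $\mathcal{D}$; it simply writes ``According to the assumption $\{\psi_t(x_0);\,0\le t\le T\}\subset\mathcal{D}$'' and proceeds. In other words, the authors treat the confinement of the self-interacting deterministic flow as a hypothesis (even though it is not stated in Proposition~\ref{craie} or Theorem~\ref{dell}), whereas you attempt to derive it from the positive invariance of $\mathcal{D}$ under the \emph{frozen} field $-\nabla V-\nabla W(\cdot-m)$.

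Your derivation, however, does not close as written. The differential inequality $g'(t)\le-(\rho+\alpha)g(t)+L\cdot\tfrac1t\int_0^t|\psi_s-m|\,\rmd s$ is correct, but the bootstrap $|\psi_s-m|\le g(s)+|x_0-m|$ only yields a self-consistent bound $\sup_t g(t)\le L|x_0-m|/(\rho+\alpha-L)$ \emph{provided} $\rho+\alpha>L$, and $L$ here is a local Lipschitz constant of $\nabla W$ (an upper bound on $\nabla^2W$), not the lower bound $\alpha$. Under hypothesis~\textbf{(H)} there is no reason for $\rho+\alpha>L$. Even when the constant is favourable, you only get $\psi_t$ within a fixed multiple of $|x_0-m|$ from $\varrho_t$, which does not by itself place $\psi_t$ in $\mathcal{D}$ for arbitrary $x_0\in\mathcal{D}$. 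So your confinement argument needs either an additional structural assumption (e.g.\ $\nabla^2W$ bounded above by something small relative to $\rho$) or a different mechanism; the paper sidesteps this by fiat.
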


\begin{proof}
First, we remind that if $\sigma$ is small enough, $T_\kappa(\sigma)\leq T_\kappa$ where $T_\kappa$ does not depend on $\sigma$. Moreover, $T_\kappa$ and $T_\kappa(\sigma)$ are deterministic. As a consequence, we have
\begin{equation*}
\mathbb{P}\left(\tau\leq T_\kappa(\sigma)\right)\leq\mathbb{P}\left(\tau\leq T_\kappa\right)\,.
\end{equation*}

We now prove that for any $T>0$, $\mathbb{P}\left(\tau\leq T\right)\longrightarrow0$ as $\sigma$ goes to $0$. 
According to the assumption $\left\{\psi_t(x_0)\,;\,0\leq t\leq T\right\}\subset\mathcal{D}$, we know (since $\mathcal{D}$ is an open set) that there exists $\epsilon>0$ such that $\mathbb{B}\left(0;\epsilon\right)+\left\{\psi_t(x_0)\,;\,0\leq t\leq T\right\}\subset\mathcal{D}$.

Now, on the event $\left\{\tau\leq T\right\}$, we deduce that $\sup_{t\in[0;T]}\left|\left|X_t-\psi_t(x_0)\right|\right|^2>\epsilon^2$. As a consequence:
\begin{equation*}
\mathbb{P}\left(\tau\leq T\right)\leq\mathbb{P}\left(\sup_{t\in[0;T]}\left|\left|X_t-\psi_t(x_0)\right|\right|^2>\epsilon^2\right)\,,
\end{equation*}
which goes to $0$ as $\sigma$ goes to $0$, thanks to \eqref{apple}. 
This concludes the proof.
\end{proof}

\subsection{Coupling for \texorpdfstring{$t\geq T_k(\sigma)$}{t>=Tkappasigma}}\label{ss:coupling}

In \cite{Tug14c,JOTP}, Tugaut has proved the Kramers' type law for the exit-time. He has used a coupling between the diffusion of interest ($X$ here) and another diffusion that is expected to be close from $X$ if the time is sufficiently large. The main difficulty with the considered self-stabilizing diffusion is in fact that we do not have a uniform (with respect to the time) control of the law.

Here, we have proved that the nonlinear quantity appearing in the equation (that is $\frac{1}{t}\int_0^t \delta_{X_s} \rmd s$) remains stuck - with high probability - in a small ball (for $\mathbb{W}_{2k}$) of center $\delta_m$ and radius $\kappa$ for any $t\geq T_\kappa(\sigma)$. The idea is thus to replace $\frac{1}{t}\int_0^t \delta_{X_s} \rmd s$ by $\delta_m$ and to compare the new diffusion with the self-interacting one.

In other words, we consider the diffusion
\begin{equation}
\label{sandra}
Y_t=X_{T_\kappa(\sigma)}+\sigma\left(B_t-B_{T_{\kappa}(\sigma)}\right)-\int_{T_\kappa(\sigma)}^t\nabla V\left(Y_s\right)\rmd s-\int_{T_\kappa(\sigma)}^t \nabla W(Y_s-m)\rmd s\,,
\end{equation}
for any $t\geq T_\kappa(\sigma)$ and $Y_t=X_t$ if $t\leq T_\kappa(\sigma)$.

\begin{proposition}
\label{prop:coupling}
For any $\xi>0$, if $\kappa$ is small enough, we have
\begin{equation}
\label{eq:prop:coupling}
\limsup_{\sigma\to0}\mathbb{P}\left(\sup_{T_\kappa(\sigma)\leq t\leq\exp\left[\frac{2H+10}{\sigma^2}\right]}\left|X_t-Y_t\right|\geq\xi\right)\leq\sqrt{\kappa}\,.
\end{equation}
\end{proposition}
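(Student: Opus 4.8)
The plan is to estimate the difference $|X_t - Y_t|$ directly via Itô's formula, exploiting the fact that $X$ and $Y$ are driven by the same Brownian motion (so the stochastic term cancels) and that $X_{T_\kappa(\sigma)} = Y_{T_\kappa(\sigma)}$. Writing $t \geq T_\kappa(\sigma)$ and applying Itô to $|X_t - Y_t|^2$, the Brownian increments disappear and we are left with a drift term. The $\nabla V$ contributions give, by uniform convexity of $V$, a dissipative term $-2C_V \int_{T_\kappa(\sigma)}^t |X_s - Y_s|^2 \rmd s$. The interaction terms must be compared: $X$ sees $\frac{1}{s}\int_0^s \nabla W(X_s - X_z)\rmd z$ while $Y$ sees $\nabla W(Y_s - m)$. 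I would split this into $\frac{1}{s}\int_0^s [\nabla W(X_s - X_z) - \nabla W(X_s - m)]\rmd z$, which is controlled using the Lipschitz character of $\nabla W$ on the relevant compact together with $\frac{1}{s}\int_0^s |X_z - m|\rmd z$ being small (this is where the occupation measure being close to $\delta_m$ enters), plus $\nabla W(X_s - m) - \nabla W(Y_s - m)$, which by convexity/Lipschitzness of $\nabla W$ contributes another term of the form $C|X_s - Y_s|^2$ (actually, $(X_s - Y_s, \nabla W(X_s-m)-\nabla W(Y_s-m))$ is nonnegative by convexity, so it only helps).

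The key quantitative input is that for $t \geq T_\kappa(\sigma)$ we have $\E \mathbb{W}_{2k}(\mu_t; \delta_m) \leq \kappa$, which must be converted into a pathwise statement. The natural approach is: introduce a stopping time $\mathcal{S} := \inf\{t \geq T_\kappa(\sigma) : |X_t - Y_t| \geq \xi\}$ (and also truncate when the occupation measure strays too far, to keep everything on a compact where $\nabla W$ is Lipschitz), run the Itô estimate up to $t \wedge \mathcal{S}$, take expectations, use Grönwall, and bound the resulting quantity in terms of $\sup_{t} \frac{1}{t}\int_0^t \E|X_z - m|\, \rmd z$, which is dominated by a power of $\kappa$ via Jensen and the moment bound $\E\mathbb{W}_{2k}(\mu_t;\delta_m) \le \kappa$. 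One has to be careful that the time horizon is exponentially large in $1/\sigma^2$, namely $\exp[(2H+10)/\sigma^2]$, so any $\sigma$-dependence in the Grönwall constant would be fatal; but the dissipativity from $-2C_V$ in the $V$-term (and the nonnegative contribution from the $W$-term) means the Grönwall inequality is of the contractive type $\frac{d}{dt}\E|X_t - Y_t|^2 \leq -c\, \E|X_t - Y_t|^2 + (\text{small, }\kappa\text{-controlled})$, whose solution stays bounded by $O(\kappa)$ uniformly over all time, with no exponential blow-up. Then Markov's inequality on $\E\sup_{t \leq \cdot \wedge \mathcal{S}}|X_t - Y_t|^2 = O(\kappa)$ gives $\mathbb{P}(\mathcal{S} \leq \exp[(2H+10)/\sigma^2]) = O(\sqrt{\kappa})$ after choosing $\kappa$ appropriately, which removes the stopping time and yields the claim.

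The main obstacle I anticipate is handling the term $\frac{1}{s}\int_0^s \nabla W(X_s - X_z)\rmd z - \nabla W(X_s - m)$ rigorously and uniformly in the exponentially large time window. The occupation-measure control gives $\E\mathbb{W}_{2k}(\mu_s;\delta_m) \leq \kappa$ only for $s \geq T_\kappa(\sigma)$, so one must argue that the time integral $\int_{T_\kappa(\sigma)}^t$ of these errors remains $\kappa$-small after normalization — here the contractive Grönwall structure is essential, because it effectively time-averages the source term rather than accumulating it. A secondary subtlety is that the Lipschitz constant of $\nabla W$ is only available on a compact set, so one genuinely needs the auxiliary truncation (or a localization argument showing $X$ and $Y$ stay in a fixed compact with high probability on the event we care about, using that $\mathcal{D}$ is bounded for $X$ up to $\tau$ and a Lyapunov/moment bound for $Y$), and one must check this truncation does not interfere with the $\sqrt{\kappa}$ bound. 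Finally, one should double-check that the constant $C$ from BDG and the convexity constants can indeed be taken independent of $\sigma$ — this was already the theme of the earlier sections, so it should go through, but it needs to be stated.
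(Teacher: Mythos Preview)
Your overall plan is right---apply It\^o to $|X_t-Y_t|^2$, the Brownian increments cancel, and one obtains a pathwise differential inequality $\gamma'(t)\le -c\gamma(t)+K(t)\sqrt{\gamma(t)}$ whose contractive structure gives a bound uniform over the exponentially long window. Two remarks, though. First, since the stochastic terms cancel exactly, there is no martingale left and BDG plays no role; the argument is purely a pathwise ODE comparison, not an expectation/Gr\"onwall argument. Second, and more seriously, your decomposition of the interaction drift creates a difficulty the paper's decomposition avoids. You split off $\frac{1}{s}\int_0^s[\nabla W(X_s-X_z)-\nabla W(X_s-m)]\rmd z$, which is evaluated at $X_s$ and involves $X_z$ for all $z\le s$. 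Bounding it---whether by a local Lipschitz constant or by the polynomial growth of $\nabla^2 W$---forces you to control $|X_s|$ (and $|X_z|$) on the entire window $[T_\kappa(\sigma),\exp((2H+10)/\sigma^2)]$, which in general extends past $\tau$. Your remark that ``$\mathcal{D}$ is bounded for $X$ up to $\tau$'' does not cover this, and there is no independent confinement result for $X$ to invoke; that is essentially what the coupling is meant to deliver.

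The paper's decomposition sidesteps this by adding and subtracting $\nabla W_{\mu_t}(Y_t)$ rather than $\nabla W(X_t-m)$: writing $W_{\mu_t}:=V+W*\mu_t$, one gets
\[
\gamma'(t)=-2\bigl(X_t-Y_t,\nabla W_{\mu_t}(X_t)-\nabla W_{\mu_t}(Y_t)\bigr)+2\bigl(X_t-Y_t,\nabla W(Y_t-m)-\nabla W*\mu_t(Y_t)\bigr).
\]
The first bracket gives the full dissipation $-2(\rho+\alpha)\gamma(t)$ by convexity of $V$ and $W$. The second is bounded by $2\sqrt{\gamma(t)}\cdot C(1+|Y_t|^{2k})\,\mathbb{W}_{2k}^{2k}(\mu_t;\delta_m)$ using the polynomial growth of $\nabla W$; the key point is that only $|Y_t|$ appears. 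Since $Y$ is a classical It\^o diffusion in the fixed potential $V+W(\cdot-m)$, Freidlin--Wentzell theory confines it: choosing $R$ so that the exit cost from $B(m,R)$ exceeds $H+5$, one has $\sup_t|Y_t-m|<R$ on the window with probability tending to $1$ as $\sigma\to 0$. One then works pathwise on the event $\mathcal{A}_\kappa:=\{\mathbb{W}_{2k}^{2k}(\mu_t;\delta_m)\le\kappa^k\}$ (of probability $\ge 1-\sqrt{\kappa}$ by Markov), intersected with $\{\sup|Y_t-m|<R\}$; there the ODE yields $\gamma(t)\le C^2\kappa^{2k}(1+(R+|m|)^{2k})^2/(\alpha+\rho)^2$, which is $<\xi^2$ once $\kappa$ is small enough. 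A union bound over the three events gives the $\sqrt{\kappa}$ in the limsup. So: keep your contractive-ODE intuition, but swap the decomposition so the residual sits at $Y_t$, drop the expectation/BDG layer, and use Freidlin--Wentzell for $Y$ rather than trying to localize $X$.
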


\begin{proof}
For any $t\geq T_\kappa(\sigma)$, we have
\begin{align*}
&\rmd\left|X_t-Y_t\right|^2\\
&=-2\left( X_t-Y_t\,;\,\left(\nabla V\left(X_t\right)+\nabla W\ast\mu_t(X_t) \right)-\left(\nabla V\left(Y_t\right)+\nabla W(Y_t-m)\right)\right)\rmd t\,,
\end{align*}
with the empirical measure $\mu_t:=\frac{1}{t}\int_0^t\delta_{X_s}\rmd s$. Let us define $W_m(x):=V(x)+ W(x-m)$ and $W_{\mu_t}(x):=V(x)+W\ast\mu_t(x)$. 
We thus have
\begin{align*}
\frac{\rmd}{\rmd t}\left|X_t-Y_t\right|^2=&-2\left( X_t-Y_t\,;\,\nabla W_{\mu_t}\left(X_t\right)-\nabla W_{\mu_t}\left(Y_t\right)\right)\\
&+2\left( X_t-Y_t\,;\,\nabla W(Y_t-m)-\nabla W\ast\mu_t(Y_t)\right)\,.
\end{align*}
However, $\nabla^2W_{\mu_t}=\nabla^2V+\nabla^2 W\ast\mu_t\geq\rho+\alpha>0$. So, putting $\gamma(t):=\left|X_t-Y_t\right|^2$, Cauchy-Schwarz inequality yields to
\begin{equation*}
\gamma'(t)\leq-2\left(\alpha+\rho\right)\gamma(t)+2\sqrt{\gamma(t)}\left|\nabla W(Y_t-m)-\nabla W\ast\mu_t(Y_t)\right|\,.
\end{equation*}
But by the growth assumption \eqref{domination} on $W$, we have for any probability measures $\mu,\nu$ the following control $$\left|\nabla W\ast\mu(x)-\nabla W\ast\nu(x)\right|\leq C\left(1+|x|^{2k}\right)\mathbb{W}_{2k}^{2k}\left(\mu;\nu\right)$$ where $2k$ is the degree of the polynomial $P$. We introduce the set 
\begin{equation*}
\mathcal{A}_\kappa:=\left\{\omega\in\Omega\,\,:\,\,\mathbb{W}_{2k}^{2k}\left(\mu_t;\delta_m\right)\leq\kappa^k\right\}\,.
\end{equation*}

By Markov inequality, we have $\mathbb{P}\left(\mathcal{A}_\kappa\right)\geq1-\kappa^k$. This implies for any $t\geq T_\kappa(\sigma)$ and for any $\omega\in\mathcal{A}_\kappa$:
\begin{equation*}
\gamma'(t)\leq-2\left(\alpha+\rho\right)\gamma(t)+2C\kappa^k\sqrt{\gamma(t)}\left(1+\left|Y_t\right|^{2k}\right)\,.
\end{equation*}
 However, $\gamma(t)=0$ for any $t\leq T_\kappa(\sigma)$. This means that $$\left\{t\geq0\,\,:\,\,\gamma(t)>\frac{C^2\kappa^{2k}\left(1+\left|Y_t\right|^{2k}\right)^2}{(\alpha+\rho)^2}\right\}\subset\left\{t\geq0\,\,:\,\,\gamma'(t)<0\right\}.$$ We deduce that
\begin{equation*}
\sup_{T_\kappa(\sigma)\leq t\leq\exp\left[\frac{2H+10}{\sigma^2}\right]}\gamma(t)\leq\frac{C^2\kappa^{2k}\left(1+\sup_{T_\kappa(\sigma)\leq t\leq\exp\left[\frac{2H+10}{\sigma^2}\right]}\left|Y_t\right|^{2k}\right)^2}{(\alpha+\rho)^2}\,,
\end{equation*}
if $\omega\in\mathcal{A}_\kappa$. We now consider $R>0$ such that the exit cost of the diffusion $Y$ from the ball of center $m$ and radius $R$ is at least $H+6$, meaning that $\inf \{V(x) + W(x~\!-~\!m) - V(m): x \in B(m, R)\} \geq H + 6$.

Then, by Freidlin-Wentzell theory, we deduce that\\ $\lim_{\sigma\to 0}\mathbb{P}\left(\sup_{T_\kappa(\sigma)\leq t\leq\exp\left[\frac{2H+10}{\sigma^2}\right]}\left|Y_t - m\right|\geq R\right)=0$. However, we have
\begin{eqnarray*}
&\mathbb{P}& \left(\sup_{T_\kappa(\sigma)\leq t\leq\exp\left[\frac{2H+10}{\sigma^2}\right]} \left|X_t-Y_t\right|\geq\xi\right)\\ 
& &\quad \leq \mathbb{P}\left(\sup_{T_\kappa(\sigma)\leq t\leq\exp\left[\frac{2H+10}{\sigma^2}\right]}\left|Y_t - m\right|\geq R\right)\\
& & \quad + \mathbb{P}\left(\sup_{T_\kappa(\sigma)\leq t\leq\exp\left[\frac{2H+10}{\sigma^2}\right]}\gamma(t)\geq\xi^2,\sup_{T_\kappa(\sigma)\leq t\leq\exp\left[\frac{2H+10}{\sigma^2}\right]}\left|Y_t - m\right|<R\right)\\
&&\quad+ \mathbb{P}\left(\mathcal{A}_\kappa^c\right)\,.
\end{eqnarray*}
The first term tends to $0$ as $\sigma$ goes to $0$. The second term is equal to $0$ provided that $\xi>\frac{C\kappa^{k} \big(1+ 2^{2k - 1}(R + |m|^{2k}) \big)}{\alpha+\rho}$. In other words, if $\kappa$ is small enough, the second term is equal to $0$ uniformly with respect to $\sigma$. The third term is less than $\sqrt{\kappa}$. This concludes the proof.
\end{proof}

\subsection{Proof of Theorem~\ref{dell}}\label{ss:conclusion}

Now we will prove the main Theorem~\ref{dell}. The idea of the proof is to use the fact that diffusions $Y$ and $X$ are close to each other after the deterministic stabilization time $T_{\kappa}(\sigma)$ and until some fixed deterministic time $\exp{\frac{2(H + 5)}{\sigma^2}}$, which we chose to be sufficiently big for our line of reasoning. We can control the proximity of these two diffusions by parameter $\kappa$, which represents how close time-limit of our occupation measure and $\delta_m$ are. It was already shown in Proposition~\ref{craie}, that with $\sigma \to 0$ probability of exiting before time $T_{\kappa}(\sigma)$ tends to zero, which means that we can only focus on our dynamics after the stabilization of occupation measure happens. After that, for the upper bound we show, that the event $\tau > \exp\{\frac{2(H + \delta)}{\sigma^2}\}$ is not very likely due to the fact that, for small $\sigma$, diffusion $Y$ can even leave some bigger domain in a smaller time, which contradicts the closeness of $X$ and $Y$. Same type of reasoning takes place for the lower bound. Let us now provide the rigorous proof.

Fix some $\delta, \kappa > 0$, decrease it if necessary to be $\delta < 5$. For the upper bound, consider the following inequality:
\begin{equation}\label{eq:tau_lowbound1}
    \mathbb{P}(\tau > e^{\frac{2(H + \delta)}{\sigma^2}}) \leq \mathbb{P}(\tau > e^{\frac{2(H + \delta)}{\sigma^2}}, \tau^Y_{\mathcal{D}^e} \leq e^{\frac{2(H + \delta)}{\sigma^2}}) + \mathbb{P}(\tau^Y_{\mathcal{D}^e} > e^{\frac{2(H + \delta)}{\sigma^2}}),
\end{equation}
where $\mathcal{D}^e$ is some enlargement of domain $\mathcal{D}$ such that its exit cost is equal to $H + \frac{\delta}{2}$, i.e.: 
\begin{equation*}
    \mathcal{D}^e:= \{x \in \R^d: V(x) + W(x - m) - V(m) < H + \frac{\delta}{2} \};
\end{equation*}
and $\tau^Y_{\mathcal{D}^e}$ is exit time of diffusion $Y$ from this domain, i.e.:
\begin{equation*}
    \tau^Y_{\mathcal{D}^e} := \inf \{t: Y_t \notin \mathcal{D}^e\}.
\end{equation*}
Note, that domain $\mathcal{D}^e$ (since both $V$ and $W$ are continuous and convex) satisfies the usual assumptions (see \cite{DZ}) and $d_e:= d(\mathcal{D}, \mathcal{D}^e) > 0$. By classical result of Freidlin-Wentzell theory,
\begin{equation*}
    \mathbb{P}(\tau^Y_{\mathcal{D}^e} > e^{\frac{2((H + \delta/2)  + \delta/2)}{\sigma^2}}) \xrightarrow[\sigma \to 0]{} 0.
\end{equation*}
Let us decrease $\sigma_\kappa$ if necessary, such that the quantity above will be less then $\sqrt{\kappa}$ for any $\sigma < \sigma_\kappa$. Moreover, the first probability in \eqref{eq:tau_lowbound1} can be bounded by:
\begin{equation*}
    \mathbb{P}(\tau^Y_{\mathcal{D}^e} \leq e^{\frac{2(H + \delta)}{\sigma^2}} < \tau) \leq \mathbb{P}(|X_{\tau^Y_{\mathcal{D}^e}} - Y_{\tau^Y_{\mathcal{D}^e}}| \geq d_e) \leq 2\kappa^k,
\end{equation*}
where we use Proposition \ref{prop:coupling} and decrease $\kappa$ and $\sigma_\kappa$ if necessary.

We approach the lower bound similarly and introduce the contraction of the domain $\mathcal{D}$:
\begin{equation*}
    \mathcal{D}^c := \inf \{x \in \R^d: V(x) + W(x - m) - V(m) < H - \frac{\delta}{2} \}.
\end{equation*}
If $\mathcal{D}^c$ turns out to be empty, decrease $\delta$. As previously, the domain $\mathcal{D}^c$ satisfies usual properties and has positive distance with the initial domain, that is $d_c := d(\mathcal{D}^c, \mathcal{D}) > 0$. We introduce exit-time from the contracted domain for diffusion $Y$:
\begin{equation*}
    \tau_{\mathcal{D}^c}^Y := \inf \{t: Y_t \notin \mathcal{D}^c\},
\end{equation*}
and have the following estimate:
\begin{equation*}
    \begin{aligned}
    \mathbb{P}( \tau < e^{\frac{2(H - \delta)}{\sigma^2}}) & \leq \mathbb{P}(T_{\kappa}(\sigma) < \tau < e^{\frac{2(H - \delta)}{\sigma^2}} \leq \tau_{\mathcal{D}^c}^Y) \\
    &\quad\quad + \mathbb{P}(\tau \leq T_{\kappa}(\sigma))  + \mathbb{P}(\tau_{\mathcal{D}^c}^Y \geq e^{\frac{2((H - \delta/2) - \delta/2)}{\sigma^2}}) \\
    & \leq \mathbb{P}(|X_{\tau} - Y_{\tau}| \geq d_c) ++ \mathbb{P}(\tau \leq T_{\kappa}(\sigma))+  2\kappa^k \\
    & \leq 3\kappa^k + \mathbb{P}(\tau \leq T_{\kappa}(\sigma))\leq 4\kappa^k,
    \end{aligned}
\end{equation*}
by Proposition~\ref{craie} \eqref{apple2}, with $\kappa$ and $\sigma_\kappa$ small enough. That leads to:
\begin{equation*}
    \mathbb{P}(e^{\frac{2(H - \delta)}{\sigma^2}} \leq \tau \leq e^{\frac{2(H + \delta)}{\sigma^2}}) \geq 1 - 7\kappa^k,
\end{equation*}
which proves the theorem if we consider $\kappa \to 0$, parameter that uniformly controls the convergence of $\sigma$ towards $0$.

\subsection{Proof of Corollary \ref{hphp}}
\label{s:hphp}

We can apply Theorem~\ref{dell} to the level sets of the potential $V+W\ast\delta_m$.

By definition of $\mathcal{N}$ in Corollary~\ref{hphp}, there exists a constant $\xi>0$ such that 
$\displaystyle\inf_{z\in\mathcal{N}}\left(V(z)+W(z-m)-V(m)\right)=H+3\xi$. We introduce the set 
\begin{eqnarray*}
\mathcal{K}_{H+2\xi}:=\left\{x\in\mathbb{R}^d\,\,:\,\,V(x)+W(x-m)-V(m)<H+2\xi\right\}\,.
\end{eqnarray*}
If we denote by $\tau_\xi$ the first exit time of $X$ from $\mathcal{K}_{H+2\xi}$, then we obtain
\begin{eqnarray}
\label{oz}
\lim_{\sigma \to0}\mathbb{P}\left\{\exp\left[\frac{2}{\sigma^2}\left(H+2\xi-\rho\right)\right]<\tau_\xi<\exp\left[\frac{2}{\sigma^2}\left(H+2\xi+\rho\right)\right]\right\}=1
\end{eqnarray}
for any $\rho>0$. By construction of $\mathcal{K}_{H+2\xi}$, $\mathcal{N}\subset\mathcal{K}_{H+2\xi}^c$, which implies
\begin{align*}
\mathbb{P}\left\{X_{\tau}\in\mathcal{N}\right\}\leq&\mathbb{P}\left\{X_{\tau}\notin\mathcal{K}_{H+2\xi}\right\}\\
\leq&\mathbb{P}\left\{\tau_{\xi}\leq\tau\right\}\\
\leq&\mathbb{P}\left\{\tau_{\xi}\leq\exp\left[\frac{2(H+3\xi)}{\sigma^2}\right]\right\}+\mathbb{P}\left\{\exp\left[\frac{2H+\xi}{\sigma^2}\right]\leq\tau\right\}\,.
\end{align*}
Applying \eqref{oz} with $\rho:=\xi$ to the first term and Theorem \ref{dell} to the second one, we obtain the result.

\begin{small}
\def\cprime{$'$}

\end{small}

\end{document}